\newcommand{\norm}[1]{\left\lVert #1\right\rVert}
\newcommand{\abs}[1]{\left\lvert #1\right\rvert}                                                                                                                                                                           
\newcommand{\bigabs}[1]{\bigl\lvert #1\bigr\rvert}
\newcommand{\Nil}{\rm{Nil}}
\newcommand{\Sol}{\rm{Sol}}
\numberwithin{equation}{section}
\theoremstyle{plain}
\newtheorem{theorem}{Theorem}[section]
\newtheorem{lemma}[theorem]{Lemma}
\newtheorem{cor}[theorem]{Corollary}
\newcommand\R{\mathbb{R}}
\newcommand{\bT}{\mathbb{T}}
\newcommand{\eE}{\mathrm{e}}
\newcommand{\g}{\mathfrak{g}}
\begin{document}
\parskip2pt

\title[The non-Lagrangian case]{The Calabi--Yau equation for $T^2$-bundles over $\mathbb{T}^2$: the non-Lagrangian case}

\author{ \ Ernesto Buzano, \ Anna Fino,\ Luigi Vezzoni}

\begin{abstract}
In the spirit of \cite{TW,FLSV}, we study the Calabi-Yau equation on    $T^2$-bundles over  $\mathbb{T}^2$ endowed with an invariant non-Lagrangian almost-K\"ahler structure showing that for $T^2$-invariant initial data it  reduces to a Monge-Amp\`ere equation having a unique solution. In this way we prove that for  every total space $M^4$ of an orientable   $T^2$-bundle over  $\mathbb{T}^2$ endowed with an   invariant  almost-K\"ahler structure    the Calabi-Yau problem has a solution for every normalized $T^2$-invariant  volume form.
\end{abstract}

\maketitle

\section{Introduction}

Let  $(M^{2n}, J, \Omega)$  be a $2n$-dimensional compact K\"ahler manifold with associated complex structure  $J$ and symplectic form  $\Omega$.
In view of a  celebrated Yau's  theorem \cite{Yau} for every  volume form $\sigma$  on $M^{2n}$ satisfying
\begin{equation} \label{normvol} 
\int_{M^{2n}}\Omega^n=\int_{M^{2n}} \sigma
\end{equation}
 there exists a unique K\"ahler form $\tilde{\Omega}$ in the same de Rham cohomology class of
$\Omega$  and such that
\begin{equation}\label{CY}
\tilde{\Omega}^n =\sigma\,.
\end{equation}
Equation \eqref{CY} still makes sense in the {\em almost-K\"ahler} context when $J$ is merely an almost-complex structure and $\Omega$ remains closed. The  almost complex structure $J$  is still
orthogonal relative to a Riemannian metric $g$ for which
$\Omega(X,Y)=g(JX,Y)$, and
\begin{equation} \label{dalpha}
\tilde \Omega =\Omega +d\alpha
\end{equation}
is again assumed to be a positive-definite $(1,1)$-form relative to
$J$. 
In this context  the equations \eqref{normvol}, \eqref{CY}  and \eqref{dalpha} constituite   the {\em Calabi-Yau  problem}, which in the last years  has been  intensively studied in four dimensions (see \cite{Donaldson,TWY,TW,FLSV} and the references therein).

 In \cite{Donaldson} Donaldson introduced the Calabi-Yau problem  for almost-K\" ahler manifolds showing that
equation \eqref{CY} has unique solution in dimensions four  and that it is related to some  other central  problems in symplectic geometry. In \cite{TWY} Tosatti, Weinkove and Yau gave a sufficient condition  for the existence of solution to the Calabi-Yau equation in terms of the Chern connection.   
This condition fails in case of  the  Kodaria-Thurston surface, which is a $4$-dimensional   nilmanifold, i.e.  a compact quotient of  the nilpotent Lie group ${\rm Nil}^3\times \R$
by a lattice,  where  ${\rm Nil}^3$ denotes the $3$-dimensional  real  Heisenberg group.

 The Kodaira-Thurston  surface is a typical example of a compact almost-K\"ahler $4$-dimensional manifold which does not  admit any  K\"ahler structure.  More precisely, 
  it  is    the total space  of a  principal $T^2$-bundle over a torus $\mathbb{T}^2$ (in our notation $T^2$ denotes the torus on the fibres, while $\mathbb{T}^2$ is the torus at the basis) and  it has an   invariant almost-K\"ahler structure  whose symplectic form vanishes along the fibres of the $T^2$-fibration, where by invariant structure  we mean  a structure  induced by a left-invariant one on  ${\rm Nil}^3\times \R$.  The almost-K\"ahler structures on a total space of a  fibration whose symplectic form vanishes along the fibres are usually called \emph{Lagrangian}, since the fibers are Lagrangian submanifolds. 

In \cite{TW} Tosatti and Weinkove studied the Calabi-Yau equation on the Kodaira-Thurston surface endowed with an invariant   Lagrangian almost-K\"ahler structure, showing the existence of a solution for every $T^2$-invariant normalized volume form $\sigma$.  In \cite{FLSV}  the previous result  obtained by Tosatti and Weinkove  was simplified  and   extended  to other  $T^2$-bundles over a $\mathbb{T}^2$ endowed with an invariant  Lagrangian almost-K\"ahler structure. 

We recall that in  view of \cite{Ue}  every orientable $T^2$-bundle over a $\mathbb{T}^2$  is  a infra-solvmanifold, i.e. a  smooth quotient $\Gamma\backslash G$ covered by
 a solvmanifold $\tilde \Gamma \backslash G$, compact quotient  by a co-compact discrete subgroup of  one of the following four   Lie groups 
$$
\R^4,\quad {\rm Nil}^3\times \R\,,\quad {\rm Nil}^4\,,\quad {\rm Sol}^3\times \R\,.
$$
  These  Lie groups are all diffeomorphic to $\R^4$. The Lie  
groups  ${\rm Nil}^3$, ${\rm Nil}^4$ are nilpotent  and  $Sol^3$ is a particular solvable (non nilpotent)  Lie group.

In particular,  if   the total space $M^4$ of an   orientable  $T^2$-bundle over a $\bT^2$   is a solvmanifold, then  it must be the  compact quotient of one of the above Lie groups $G$.  It is well 
known  that all   the orientable  $T^2$-bundles over  $\bT^2$ admit symplectic structures  (see \cite{Geiges}). The notion of invariant almost-K\" ahler structure makes sense for orientable  $T^2$-bundles over  $\bT^2$, meaning 
one induced from a left-invariant structure on $G$ which is invariant by  the discrete subgroup $\Gamma$.

\smallskip
 As a main result of \cite{FLSV}  it was shown that 
if  $M^4=\Gamma \backslash G$ is an orientable   $T^2$-bundle over a $\mathbb{T}^2$ with $G=\Nil^3\times\R$ or $\Nil^4$, and
if  $M^4$ admits an invariant  Lagrangian almost-K\"ahler structure $(\Omega,J)$, then for every normalized volume form $\sigma=\eE^F \Omega^2$
with $F\in C^\infty(\bT^2)$, the corresponding Calabi--Yau problem has
a unique solution.

The Lagrangian condition may or may not apply in the  case of $G = \Nil^3\times\R$, but is automatic 
when $M^4$ is modelled on  the $3$-step nilpotent Lie group $Nil^4$. In the case of $G = \Sol^3$ every  invariant almost-K\" ahler on  $\Gamma \backslash G$   is non-Lagrangian.

The aim of this paper is to  extend the main result  in \cite{FLSV} to the {\em non-Lagrangian} cases,  i.e.  to   some $T^2$-fibrations modelled on ${\rm Nil}^3 \times \R$ and to   all the 
$T^2$-fibrations modelled on ${\rm Sol}^3\times \R$.  

Our main  result is the following
\begin{theorem}\label{main}
Let  $M^4=\Gamma \backslash G$ be  an orientable   $T^2$-bundle over a $\mathbb{T}^2$ with  $G=\Nil^3\times\R$ or   $\Sol^3\times \R$, and
suppose that $M^4$ admits an invariant  non-Lagrangian almost-K\"ahler structure $(\Omega,J)$.  Then for every normalized volume form $\sigma=\eE^F\Omega^2$
with $F\in C^\infty(\bT^2)$, the corresponding Calabi--Yau problem has
a unique solution.
\end{theorem}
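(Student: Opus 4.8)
The plan is to reduce the fully nonlinear Calabi--Yau equation $\tO^2=\eE^F\Omega^2$ to a scalar Monge--Amp\`ere equation on the base $\bT^2$ and then to solve the latter by classical means. I would first fix a left-invariant coframe $\{e^1,e^2,e^3,e^4\}$ on $G$ adapted to the $T^2$-fibration, distinguishing the two fibre directions from the two base directions; for $G=\Nil^3\times\R$ one may take $de^3=e^1\wedge e^2$ and $de^1=de^2=de^4=0$, with the fibre spanned by the duals of $e^3,e^4$, while for $G=\Sol^3\times\R$ the analogous solvable structure equations (with hyperbolic monodromy) govern the fibration. Writing the invariant pair $(\Omega,J)$ in this coframe, the \emph{non-Lagrangian} hypothesis is recorded as the statement that the restriction of $\Omega$ to the fibre $T^2$ is nonzero; this is precisely the feature absent from \cite{TW,FLSV}.

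Next I would use the $T^2$-invariance of the data. Since $F\in C^\infty(\bT^2)$ and the structure is invariant, I look for an invariant solution $\tO=\Omega+d\alpha$ with $\alpha$ pulled back from the base. Requiring that $\tO$ be a positive $(1,1)$-form for $J$ and lie in the de Rham class of $\Omega$ restricts the available freedom so that $\tO$ is governed by a scalar potential $u\in C^\infty(\bT^2)$, together with finitely many constants fixed by the cohomological normalization \eqref{normvol}. Substituting this ansatz into $\tO^2=\eE^F\Omega^2$ and factoring out the invariant fibre volume converts the top-degree identity on $M^4$ into a single scalar equation on $\bT^2$.

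The core of the argument is to show that this scalar equation has the Monge--Amp\`ere form $\det(\mathrm{Hess}\,u+S)=c\,\eE^F$ on $\bT^2$, where the symmetric background term $S$ collects the invariant contributions of the non-Lagrangian part of $\Omega$ and of the structure constants of $G$, the constant $c>0$ is fixed by \eqref{normvol}, and the positivity of $\tO$ is exactly equivalent to the positive-definiteness of $\mathrm{Hess}\,u+S$. Once the equation is in this elliptic Monge--Amp\`ere form, existence and uniqueness of $u$ follow from the classical theory on the two-torus --- the continuity method together with the standard $C^0$, $C^2$ and $C^{2,\alpha}$ a priori estimates --- as in the Lagrangian case, while uniqueness also follows directly from strict ellipticity and the maximum principle.

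The step I expect to be the main obstacle is this reduction. In the non-Lagrangian case $\Omega$ mixes fibre and base directions, so the compatibility of $\tO$ with $J$ no longer decouples, and both the parametrization of invariant positive $(1,1)$-forms and the passage to the scalar equation are more delicate than in \cite{TW,FLSV}. The decisive technical point will be to verify that, despite this mixing, the reduced equation is genuinely of Monge--Amp\`ere type with the correct sign, so that the classical existence theory applies; confirming that the $\Sol^3\times\R$ structure constants do not spoil this form, and handling the two model groups uniformly, is where I expect the real work to lie.
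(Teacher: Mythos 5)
Your overall strategy---reduce to a scalar equation on $\bT^2$ and solve it by the continuity method---is the same as the paper's, but the proposal leaves precisely the step you yourself flag as ``the main obstacle'' undone, and the two concrete claims you make about how that step comes out are both incorrect. First, the ansatz ``$\alpha$ pulled back from the base'' cannot work: a pullback $1$-form from $\bT^2$ has $d\alpha$ proportional to the base area form, which can never satisfy the $J$-invariance constraint together with the volume equation. In the paper the $1$-form is $a=\sum_k a_k f^k$ with \emph{all four} components present in an adapted orthonormal coframe, only the coefficients $a_k$ being functions on the base; the Calabi--Yau problem then becomes a system of \emph{three} coupled PDEs (two first-order constraints from $J(da)=da$ plus the volume equation) in the four unknowns $a_k$. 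The reduction to one potential $u$ is achieved by an explicit, and in the $\Sol^3\times\R$ case nonlocal, substitution $a_k=a_k[u]$ (involving derivatives and iterated integrals of $u$, with periodicity enforced by $\int_{\bT^2}u=0$) chosen so that the two first-order equations are satisfied identically. Producing that substitution is the real content of the reduction, and nothing in your proposal indicates how to find it.

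Second, the reduced equation is not $\det(\Hess u+S)=c\,\eE^F$, so ``the classical theory on the two-torus'' does not apply off the shelf. What comes out is a generalized Monge--Amp\`ere equation
\begin{equation*}
(u_{xx}+B_{11}u_y+C_{11}+Du)(u_{yy}+B_{22}u_y+C_{22})-(u_{xy}+B_{12}u_y+C_{12})^2=E_1+E_2\,\eE^F,
\end{equation*}
with first- and zeroth-order terms and an \emph{affine} right-hand side, whose solvability hinges on the algebraic identities $B_{11}B_{22}-(B_{12})^2=D\le 0$ and $C_{11}C_{22}-(C_{12})^2=E_1+E_2$ together with $E_1,E_2>0$. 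These are used to prove positivity of the two factors (hence ellipticity), to derive the $C^0$ and $C^1$ a priori bounds via one-dimensional maximum-principle lemmas, to verify the integral compatibility condition along the continuity path $E_1+(1-\tau)E_2+\tau E_2\eE^F$, and to show the linearization is an isomorphism; $C^{2,1/2}$ regularity comes from Heinz's interior estimates and smoothness from Nirenberg. You would also need to handle the degenerate subcase $G^3_3=0$ for $\Nil^3\times\R$ (solved explicitly) separately. Finally, uniqueness of the solution to the Calabi--Yau problem is Donaldson's four-dimensional uniqueness theorem, not a maximum-principle consequence of the scalar equation, which would at best give uniqueness within the invariant ansatz.
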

The proof of this theorem consists in  showing that the Calabi-Yau problem  can be reduced to a single elliptic Monge-Amp\`ere equation which has solution. 

\smallskip
The trick of reducing the problem to a Monge-Amp\`ere equation
was the core of \cite{FLSV}, but the class of equations which appear in the present paper differs from the ones considered  in \cite{FLSV}.

As a consequence we show  that  for  every total space $M^4$ of an orientable   $T^2$-bundle over a $\mathbb{T}^2$ endowed with an   invariant  almost-K\"ahler structure $(\Omega,J)$   the Calabi-Yau problem has a solution for every normalized $T^2$-invariant  volume form.
\smallskip

The paper is organized as follows: In Section \ref{pre} we recall the classification of $T^2$-bundles over $\mathbb{T}^2$ and we briefly  describe
the  main result  in  \cite{FLSV}. Sections \ref{sec3} and \ref{sec4}  contain the proof of Theorem \ref{main} where the case of $G=\Nil^3\times\R$ and 
$G=\Sol^3\times \R$ are treated separately.  In each of the two cases we can reduce the problem to a Mong\`e-Ampere equation for which we show the existence of a solution.  

\section{The Calabi-Yau equation on $T^2$-bundles over $\mathbb{T}^2$}\label{pre}
Orientable $T^2$-bundles  over a  $\mathbb{T}^2$ were classified by
Fukuhara and Sakamoto in \cite{Sakamoto-Fukuhara} and it was shown by Ue in \cite{Ue,Ue2} that all these manifolds are
infra-solvmanifolds. A compact manifold $M$ is called an \emph{infra-solvmanifold} if it admits a finite cover $\pi\colon \tilde{M}\to M$, where
$\tilde{M}=\tilde \Gamma \backslash G$
is the compact quotient of a solvable Lie group $G$ by a lattice $\tilde \Gamma$. Alternatively, $M$ can be written as a quotient
$M=\Gamma\backslash G$, where $\Gamma$ is a discrete group containing  a lattice $\tilde \Gamma$ of $G$ such that
$\tilde \Gamma\backslash\Gamma$ is finite. In the case that $\tilde \Gamma$ is a lattice, $M$ is simply called a {\em solvmanifold}.

\medskip
It turns out that in the classification of $T^2$-bundles over $\mathbb{T}^2$, the   solvable Lie group $G$  must be one of  the following four patterns
\begin{equation}\label{list}
\R^4\,,\quad {\rm Nil}^3\times \R\,,\quad {\rm Nil}^4\,,\quad {\rm Sol}^3\times \R\,,
\end{equation}
while the classification of the possible $ \Gamma$'s determines eight families.   For the  Lie groups     ${\rm Nil}^3\times \R\,,  {\rm Nil}^4\,,  {\rm Sol}^3\times \R$ we have the following description:

\begin{itemize}

\vspace{2mm}\item[(1)] $\Nil^3$ is the
$3$-dimensional Heisenberg group of matrices
\begin{equation*}\label{xyz} 
\left(\!\begin{array}{ccc}1&x&z\\0&1&y\\0&0&1\end{array}\!\right)
\end{equation*}
and $\Nil^3\times \R$ is a $2$-step nilpotent Lie group which can be regarded as $\R^4$ with the product 
$$
(x_0, y_0, z_0, t_0) (x, y, z, t) = (x_0 +x ,\ y_0 +y,\ z_0+z+x_0y ,\ t_0 + t)\,.
$$

\vspace{2mm}\item[(2)] $\Nil^4 = \R\ltimes\R^3$ is the $3$-step nilpotent Lie group of
real matrices
\begin{equation*}
\left(\begin{array}{cccc} 1&t&\frac 12
  t^2&x\\ 0&1&t&y\\ 0&0&1&z\\ 0&0&0&1 \end{array} \right ).
\end{equation*}

\vspace{3mm}\item[(3)] $\Sol^3 =\R\ltimes_\varphi\R^2$ is a unimodular $2$-step solvable Lie group with $\varphi(t) =
\hbox{\small$\left(\!\begin{array}{cc} \eE^t &0\\0&\eE^{-t}\end{array}
  \!\!\right)$}$ and $\Sol^3 \times \R$ can be regarded as $\R^4$ with the product 
\[(x_0, y_0, z_0, t_0) (x, y, z, t) = (x_0 + \eE^{t_0} x ,\ y_0 +
  \eE^{-t_0} y,\ z_0 + z,\ t_0 + t).\]
\end{itemize}

The diffeomorphism classes of (the total space of) $T^2$-bundles over
$\bT^2$ can be summarized in Geiges' eight  families
\cite[Table~1]{Geiges}, which can be explicitly described in terms of the generators of the  discrete groups $\Gamma$,  the
monodromy matrices  along the two curves
 generating $\pi_{1}(\bT^2)$, as well as the Euler
class  for the corresponding $T^2$-bundle.

In the case of $G = \Nil^3 \times \R$ one has two inequivalent fibrations
$$
\begin{array}{rcl}
\pi_{xy} &\colon& M^4 \to \bT^2_{xy},\\[5pt]
\pi_{yt} &\colon& M^4 \to \bT^2_{yt}
\end{array}
$$
induced from the
following coordinate mappings:
$$
\begin{array}{l}
(x,y,z,t)\ \mapsto (x,y),\\[5pt]
(x,y,z,t)\ \mapsto  (y,t).
\end{array}
$$

If $\Gamma$ is not a lattice of $G$, we have that 
$\Gamma$   contains a lattice $\tilde \Gamma$ of $G$ such that
the quotient $\tilde \Gamma \backslash \Gamma$ is a finite group. Therefore there exists a
covering map $p\colon  \tilde \Gamma \backslash  G \to \Gamma\backslash G$
which preserves the $T^2$-bundle structure over $\mathbb{T}^2$.

We recall that an {\em almost-K\"ahler structure} on a manifold $M$ is a pair $(\Omega,J)$, where $\Omega$ is a symplectic 
form and $J$ is a endomorphism of the tangent bundle to $M$ satisfying $J^{2}=-I$ and 
$$
\Omega(JX,JY)=\Omega(X,Y)\,,\quad \Omega(Z,JZ)>0
$$ 
for every tangent vector fields $X,Y,Z$ on $M$ with $Z$ nowhere vanishing. Every almost-K\"ahler structure induces 
the Riemannian metric 
$$
g(X,Y)=\Omega(X,JY)\,.
$$
\smallskip 
In this paper (as in \cite{FLSV}) we consider  on the total space  $M^4 = \Gamma \backslash G$ of    $T^2$-bundles over
$\bT^2$    invariant almost-K\"ahler structures, i.e. ones induced from  left-invariant structures on $G$ which are invariant by  the discrete group $\Gamma$ and we study  for these almost-K\"ahler manifolds the Calabi-Yau problem. In particular every
invariant  almost-K\"ahler  structure on $M^4 = \Gamma\backslash G$ induces an
invariant  almost-K\"ahler  structure  on  the solvmanifold $ \tilde \Gamma \backslash G$.

The case $G=\R^4$ (which corresponds to two of the Geiges' families)   is not  interesting from our point of view, since in this case every invariant almost-K\"ahler structure is K\"ahler and Yau's theorem can be applied. For the other cases we have to distinguish the 
Lagrangian case from the non-Lagrangian one. 
If $M^4$ is modelled on $G=\Nil^4$  or  on   $G=\Nil^3\times \R$ with bundle  structure given by the projection $\pi_{xy}$, then every invariant almost-K\"ahler is Lagrangian. If  $M^4$ is modelled on $\Nil^3\times \R$ with bundle structure given by the projection $\pi_{yt}$ then it admits Lagrangian and non-Lagrangian  almost-K\"ahler structures as well. In the case $G=\Sol^3\times \R$, every invariant almost-K\"ahler structure is non-Lagrangian.

 Let now $M^4 = \Gamma/G$ be  an  orientable $T^2$-bundle  over $\mathbb{T}^2$ and denote by $\g$ the Lie algebra of $G$. 
Then every basis $(e^i)$ of the dual space $\g^*$ induces a global frame of $1$-forms on $M^4$.  
Furthermore we fix  an invariant almost-K\"ahler structure 
$(\Omega,J)$ on $M^4$. Let $\sigma={\rm e}^F\,\Omega^2$ be a volume  form  and let $F$ be  a smooth map on the base
$\mathbb{T}^2$ of $M^4$ satisfying 
$$
\int_{\bT^2} (\eE^F-1)=0.
$$  
Then in this case the Calabi-Yau problem reads as 
\begin{equation}\label{torus}
\begin{cases}
(\Omega+da)^2={\rm e}^F\,\Omega^2\,,\\
J(da)=da\,,
\end{cases}
\end{equation} on $M^4$ whose components with respect to the coframe $(e^i)$ are defined on the torus $\bT^2$.  Thus the Calabi-Yau problem reduces to  a system of partial differential equations on the base $\bT^2$.

Although  the system \eqref{torus} depends on the choice of $G$, $(\Omega,J)$ and the structure of $T^2$-fibration, for all the cases we can proceed in the following way:\\
first we  parametrize $(\Omega,J)$ using a suitable invariant coframe on $M^4$  in order to simplify the formulation of  \eqref{torus} as far as possible and then we perform a suitable change of variables transforming the system \eqref{torus} in a Monge-Amp\`ere  equation on the base $\bT^2$.  
 
The Lagrangian cases have been considered in \cite{FLSV}, where  it has been proved that  
has a unique solution. In the next  two sections we will consider the non-Lagrangian cases for the manifolds modelled on $\Nil^3 \times \R$ and $\Sol^3 \times \R$.

\section{Manifolds modelled on ${\rm Nil}^3\times \R$: the non-Lagrangian case} \label{sec3}
In this section we study the Calabi-Yau problem for $T^2$-bundles over  ${\mathbb T}^2$  modelled on ${\rm Nil}^3\times \R$ and equipped with an invariant non-Lagrangian almost-K\"ahler structure.
 
\smallskip  
The structure of $T^2$-bundle over a $\bT^2$    is then  induced by the projection 
$\pi_{yt}$ onto the torus $\bT^2_{yt}$.  The total space $M^4$ of the $T^2$-fibration  has the global invariant coframe 
$$
e^1= dy\,,\quad e^2=dx\,,\quad e^3=dt\,,\quad e^4=dz-xdy
$$
which satisfies the structure equations 
\begin{equation}
\label{struc-eq} de^1=de^2=de^3=0\,,\qquad de^4=e^{12}\,. 
\end{equation}
 
\begin{lemma}\label{inv2}
 Let $M^4$ be the total space of an oriented  $T^2$-bundle over  ${\mathbb T}^2$  modelled on ${\rm Nil}^3\times \R$ and induced by  the projection 
$ \pi_{yt}$ onto the torus $\bT^2_{yt}$.
 
 Let $(\Omega,J)$ be an invariant almost-K\"ahler structure on $M^4$ with induced Riemannian metric $g$.
Then there exists an orthonormal  invariant coframe $(f^i)$ for which
\begin{equation}\label{Omega5}
\Omega=f^{14}+f^{23},
\end{equation}
and 
\begin{equation}
\label{eqn:Y1}
f^1\in \langle e^1\rangle\,,\quad g(e^3,f^2)=0\,,\quad g(e^3,f^3)g(e^3,f^4)\ge 0.
\end{equation}
\end{lemma}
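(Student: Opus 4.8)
The plan is to prove Lemma~\ref{inv2} by pointwise linear algebra on the Lie algebra. Since the data $(\Omega,J,g)$ and the coframe $(e^i)$ are all invariant, it suffices to produce $(f^i)$ as a suitable basis of $\g^*$: the resulting coframe is then automatically invariant. Throughout I identify $g$ with the induced inner product on $\g^*$ and write $J^*$ for the induced action of the (fixed) almost-complex structure on $1$-forms, normalized by the paper's convention $\Omega(\cdot,\cdot)=g(J^*\cdot,\cdot)$, so that $g(X,Y)=\Omega(X,J^*Y)$.

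First I would recall the standard normal form in dimension four: an orthonormal coframe $(f^i)$ satisfies \eqref{Omega5} precisely when $\langle f^1,f^4\rangle$ and $\langle f^2,f^3\rangle$ are $g$-orthogonal, $J^*$-invariant planes with $f^4=-J^*f^1$ and $f^3=-J^*f^2$. Such a coframe is therefore determined by the choice of a unit covector $f^1$ — which fixes the complex line $L_1=\langle f^1,J^*f^1\rangle$, hence $f^4$, hence the orthogonal complex line $L_2=L_1^{\perp}$ — together with the choice of a unit covector $f^2\in L_2$, which fixes $f^3=-J^*f^2$. The compatibility $\Omega(Z,J^*Z)>0$ forces the coefficients in \eqref{Omega5} to be $+1$, and since $\tfrac12\Omega^2=f^{1234}$ every such coframe is automatically positively oriented; the collection of these coframes is a $U(2)$-torsor.

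Next I would use this freedom to meet the conditions \eqref{eqn:Y1} one at a time. For the first, set $f^1=e^1/\sqrt{g(e^1,e^1)}$; this lies in $\langle e^1\rangle$ and determines $f^4$, $L_1$ and $L_2$ as above. For the second I choose a unit $f^2$ inside the $2$-plane $L_2$ that is $g$-orthogonal to $e^3$: since $g(e^3,f^2)=0$ is a single linear condition on the unit circle of $L_2$, such an $f^2$ exists, and it is unique up to sign unless $e^3\perp L_2$, in which case any unit $f^2\in L_2$ works (and then $g(e^3,f^3)=0$, so the third condition holds trivially). This produces $f^3=-J^*f^2$ and the required $g(e^3,f^2)=0$.

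Finally, for the sign condition $g(e^3,f^3)\,g(e^3,f^4)\ge 0$ I would exploit the residual freedom left after the first two steps, which is exactly the pair of sign changes $(f^1,f^4)\mapsto(-f^1,-f^4)$ and $(f^2,f^3)\mapsto(-f^2,-f^3)$. The first reverses the sign of $g(e^3,f^4)$ while leaving $g(e^3,f^3)$ and the relation $g(e^3,f^2)=0$ untouched; the second reverses $g(e^3,f^3)$ while leaving $g(e^3,f^4)$ and $g(e^3,f^2)=0$ untouched. Hence the sign of the product can be switched at will, and I pick signs making it nonnegative. Each such operation flips a whole pair, so it preserves orthonormality, the normal form \eqref{Omega5}, and therefore the orientation, as well as the two conditions already arranged. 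The main point to verify — and the only mildly delicate step — is precisely this bookkeeping: that imposing the first two conditions in \eqref{eqn:Y1} pins the coframe down to exactly a $(\mathbb{Z}/2)^2$ of sign choices, and that this discrete freedom is enough to fix the sign in the third.
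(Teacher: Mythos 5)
Your proposal is correct and follows essentially the same route as the paper: normalize $f^1$ along $e^1$, rotate the orthogonal $J$-invariant plane to achieve $g(e^3,f^2)=0$, and then flip a sign pair (the paper flips $(f^2,f^3)$, preserving $f^{23}$) to arrange $g(e^3,f^3)g(e^3,f^4)\ge 0$. You merely spell out the residual $(\mathbb{Z}/2)^2$ freedom and the degenerate case $e^3\perp\langle f^2,f^3\rangle$ more explicitly than the paper does.
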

\begin{proof}
We can certainly find an orthonormal invariant coframe $(f^i)$  for which \eqref{Omega5} is valid and $f^1\in \langle e^1\rangle$.
Since $f^4=J(f^1)$, we still have the freedom of rotate $f^{23}$ in the plane orthogonal to $\langle f^1,f^4\rangle$. After a suitable rotation we obtain 
$g(e^3,f^2)=0$. Eventually, we may invert the direction of $f^2$ and $f^3$ to meet the condition  $g(e^3,f^3)g(e^3,f^4)\ge 0$, without reversing $f^{23}$.
\end{proof}
\subsection{The Calabi-Yau equation on $M^4$.} Consider on $M^4$ an invariant non-Lagrangian  almost-K\"ahler structure $(\Omega,J)$ with induced  Riemannian metric $g$. Let $\sigma={\rm e}^F\,\Omega^2$ be a volume form where $F=F(y,t)$ is a smooth map on the 
base satisfying 
\begin{equation}
\label{eqn:Z3}
\int_{\mathbb{T}^2} ({\rm e}^F-1)=0\,.
\end{equation}
Consider the Calabi-Yau  equation 
$$
\begin{cases}
(\Omega+da)^2=\sigma\,,\\
J(da)=da,
\end{cases}
$$
where $a$ is a $1$-form on $M^4$ whose components with respect to the basis $(e^i)$ depend on $(y,t)$ only. 
Let  $(f^i)$ be a coframe as in Lemma \ref{inv2} and set
$$
G^i_j=g(e^i,f^j);
$$  
then we have
$$
e^i=G^i_j f^j.
$$
In particular we have
\begin{equation}
\label{eqn:Z1}
e^1=G^1_1 f^1
\end{equation}
and
\begin{equation}
\label{eqn:Z2}
G^1_1\ne 0,\qquad G^1_2=G^1_3=G^1_4=0.
\end{equation}
Let $H=G^{-1}$ be the inverse matrix of $G = (G^i_j)$. Then 
$$
f^i=H^i_j e^j.
$$
From \eqref{eqn:Z1} and \eqref{eqn:Z2}, we have
\begin{equation*}
H^1_1=(G^1_1)^{-1}\ne 0,
\end{equation*}
and
\begin{equation}
\label{eqn:AA}
H^1_2=H^1_3=H^1_4=0.
\end{equation}
Thanks to structure equations \eqref{struc-eq},  we have
$$
d f^i=H^i_j d e^j=H^i_4 d e^4=H^i_4 e^{12}=
H^i_4 G^1_1 (G^2_2 f^{12}+G^2_3 f^{13}+G^2_4 f^{14}).
$$
The condition that $(\Omega,J)$ is non-Lagrangian implies that
\begin{equation*}
G^3_4\ne 0.
\end{equation*}
Moreover, since $G^3_2=g(e^3,f^2)=0$,  we have 
\begin{equation}\label{2eqn:1}
e^3=G^3_1 f^1+G^3_3 f^3+G^3_4 f^4,
\end{equation}
where 
\begin{equation}
G^3_3G^3_4\ge 0,
\label{eqn:Y2}
\end{equation}
thanks to \eqref{eqn:Y1}.

Differentiating \eqref{2eqn:1} we get  
$$
G^3_3 df^3+G^3_4 df^4=(G^3_3 H^3_4+G^3_4 H^4_4)e^{12}=0\,,
$$
i.e.
\begin{equation}
G^3_3 H^3_4+G^3_4 H^4_4=0.
\label{2eqn:2}
\end{equation}
Furthermore, the symplectic condition $d\Omega=0$ gives 
$$
df^{23}=0
$$
that is
\begin{equation*}
\begin{split}0&=H^2_4G^1_1 (G^2_2 f^{12}+G^2_3 f^{13}+ G^2_4 f^{14})\wedge f^3- 
H^3_4 G^1_1 f^2\wedge (G^2_2 f^{12}+G^2_3 f^{13}+ G^2_4 f^{14})
\\&=
G^1_1(H^2_4G^2_2+G^1_1H^3_4G^2_3)\,f^{123}-G^1_1H^2_4 G^2_4\, f^{134}+G^1_1H^3_4 G^2_4\, f^{124}\,.
\end{split}
\end{equation*}
It follows that
\begin{equation}
H^2_4G^2_2+H^3_4G^2_3=0,\qquad H^2_4G^2_4=H^3_4G^2_4=0.
\label{2eqn:3}
\end{equation}
From \eqref{2eqn:2} and \eqref{2eqn:3} we have
\begin{equation}
\label{eqn:BB}G^2_4G^3_4H^4_4=G^2_4(G^3_3H^3_4+G^3_4H^4_4)=0,
\end{equation}
hence, since $H^1_4=0$, and $H^2_4$, $H^3_4$ and $H^4_4$ cannot vanish all together, from \eqref{eqn:AA}, \eqref{2eqn:3} and \eqref{eqn:BB} we obtain that
\begin{equation*}
G^2_4=0.
\end{equation*}
Write  
$$
a=a_k f^k
$$
and compute  
$$
\begin{aligned}
da
=& (G^1_1a_{2,y}+G^1_1G^2_2H^k_4a_k +G^3_1a_{2,t}) f^{12}+(G^1_1a_{3,y}+ G^1_1G^2_3H^k_4a_k+ G^3_1a_{3,t}- G^3_3a_{1,t}) f^{13}\\
&- G^3_4a_{2,t} f^{24}+(G^1_1a_{4,y}+ G^3_1a_{4,t}-G^3_4a_{1,t}) f^{14}- G^3_3a_{2,t}  f^{23}+( G^3_3a_{4,t}-G^3_4a_{3,t}) f^{34}.
\end{aligned}
$$
Hence $da$ is of type $(1,1)$ with respect to $J$ if and only if 
$$
\begin{cases}
G^1_1a_{2,y}+G^1_1G^2_2(H^2_4 a_2+H^3_4 a_3+H^4_4 a_4) +G^3_1a_{2,t}=-G^3_3a_{4,t} +G^3_4a_{3,t}, \\
G^1_1a_{3,y}+G^1_1G^2_3(H^2_4 a_2+H^3_4 a_3+H^4_4 a_4) +G^3_1 a_{3,t} -G^3_3a_{1,t} =- G^3_3a_{2,t} 
\end{cases}
$$
and in this case $da$ reduces to 
$$
\begin{aligned}
da=& (- G^3_3 a_{4,t}+G^3_4a_{3,t}) f^{12}- G^3_3 a_{2,t} f^{13}- G^3_4a_{2,t} f^{24}\\
&+(G^1_1a_{4,y}+ G^3_1 a_{4,t}-G^3_4a_{1,t}) f^{14}- G^3_3 a_{2,t} f^{23}+(G^3_3 a_{4,t} -G^3_4a_{3,t}) f^{34}.
\end{aligned}
$$ 
The Calabi-Yau equation now reads as  
\begin{equation*}
\mathrm e^F=(1+G^1_1a_{4,y}+ G^3_1 a_{4,t}-G^3_4a_{1,t})(1- G^3_3 a_{2,t})- G^3_3G^3_4 (a_{2,t})^2-(- G^3_3 a_{4,t} +G^3_4a_{3,t})^2
\end{equation*}
and the Calabi-Yau problem is equivalent to the  following system of partial differential equations: 
\begin{equation}
\label{2eqn:4}\begin{cases}
G^1_1a_{2,y}+G^1_1G^2_2(H^2_4 a_2+H^3_4 a_3+H^4_4 a_4) +G^3_1a_{2,t}+G^3_3a_{4,t} -G^3_4a_{3,t}=0,\\
G^1_1a_{3,y}+G^1_1G^2_3(H^2_4 a_2+H^3_4 a_3+H^4_4 a_4) +G^3_1 a_{3,t} -G^3_3a_{1,t} + G^3_3a_{2,t}=0, \\
\begin{aligned}(1+G^1_1a_{4,y}+ G^3_1 a_{4,t}-G^3_4a_{1,t})&(1- G^3_3 a_{2,t})-
\\&- G^3_3G^3_4 (a_{2,t})^2-(- G^3_3 a_{4,t} +G^3_4a_{3,t})^2=\mathrm e^F.
\end{aligned}
\end{cases}
\end{equation}
 In the system \eqref{2eqn:4} the parameter $G^3_3$ has a special role.  We will study separately the cases $G^3_3 =0$ and $G^3_3\ne 0$.

\subsection{The case $G^3_3$=0.}   This  case is quite trivial since condition $G^3_3=0$ implies
$dt\in \langle f^1,f^4\rangle$ and $f^{14}=dy\wedge dt$.  Therefore if $G^3_3=0$ the corresponding Calabi-Yau equation has the explicit solution 
$$
\tilde{\Omega}=({\rm e}^F-1) f^{14}+f^{23}\,.
$$

\subsection{The Case $G^3_3\ne 0$.}
Under this assumption 
 we consider the transformation 
\begin{align*}
& a_1=-G^3_3u_t-G^1_1(H^2_4G^2_3+H^4_4G^2_2)u, \\
& a_2=-G^3_3 u_t- H^4_4 G^1_1G^2_2 u, \\
& a_3= -H^4_4 G^1_1G^2_3 u, \\
&a_4 = G^1_1u_y+ G^3_1 u_t.
\end{align*}
A long but straightforward computation shows that  the first two equations of system \eqref{2eqn:4} are identically satisfied, while the third one  becomes
\begin{equation}\label{2eqn:6}
(u_{yy}+B_{11} u_t + C_{11})(u_{tt}+B_{22} u_t + C_{22})- (u_{yt}+B_{12} u_t + C_{12})^2=E_1+E_2\mathrm e^F,
\end{equation}
where
\begin{align*}
B_{11}&=\frac {G^2_2(G^3_1)^2 H^4_4}{G^1_1G^3_3}-2\frac {G^2_3G^3_1G^3_4H^4_4}{G^1_1G^3_3}+\frac{G^2_3G^3_4H^2_4}{G^1_1}, \\
B_{12}&=-\frac{G^2_2G^3_1H^4_4}{G^3_3}+\frac {G^2_3G^3_4H^4_4}{G^3_3}, \qquad
B_{22}=\frac{G^1_1G^2_2H^4_4}{G^3_3}, \\
C_{11}&=\frac {1}{(G^1_1)^2}+\frac {(G^3_1)^2}{(G^1_1)^2(G^3_3)^2}+\frac {G^3_4}{(G^1_1)^2G^3_3}, \\
C_{12}&=-\frac {G^3_1}{G^1_1(G^3_3)^2},\qquad
C_{22}=\frac {1}{(G^3_3)^2}, \\
E_1&=\frac {G^3_3G^3_4}{(G^1_1)^2(G^3_3)^4},\qquad E_2=(G^1_1 G^3_3)^2.
\end{align*}

In particular we have
\begin{equation*}
B_{11}B_{22}-(B_{12})^2=0
\end{equation*}
and
\begin{equation*}
C_{11}C_{22}-(C_{12})^2=E_1+E_2.
\end{equation*}

\section{Manifolds modelled on  ${\rm Sol^3}\times \R$}\label{sec4}
In this section we study the Calabi-Yau equation for  the total spaces $M^4$ of $T^2$-bundles over torus ${\mathbb T}^2$  modelled on  ${\rm Sol^3}\times \R$. 

\medskip

Since the Lie group ${\rm Sol}^3\times\R$ can be seen as $\R^4$ equipped with the product
$$
(x_0, y_0, z_0, t_0) (x, y, z, t) = (x_0 +x ,\ y_0 +y,\ z_0 + \eE^x z,\ t_0 + \eE^{-x} t)
$$
$M^4$ inherits the global  invariant coframe
\begin{equation}\label{1341}\textstyle
e^1 = dx,\qquad e^2 = dy,\qquad e^3 = \eE^x\,dz,\qquad e^4 =
\eE^{-x}dt
\end{equation}
satisfying the following structure equations
\begin{equation}
\label{eqn:X3}
d e^1=de^2=0,\qquad d e^3=e^{13},\qquad  d e^4= -e^{14}.
\end{equation}

Moreover, invariant almost-K\"ahler structure on $M^4$ should be parametrized as claimed in the following lemma proved in \cite{FLSV}

\begin{lemma}\label{inv}
Let $(\Omega,J)$ be an invariant almost-K\"ahler structure on  the total space $M^4$ of a  $T^2$-bundle over a torus ${\mathbb T}^2$  modelled on  ${\rm Sol^3}\times \R$. Let $g$ be the induced Riemannian metric.
Then there exists an orthonormal  global coframe $(f^i)$ for which
\begin{equation*}
\Omega=f^{12}+f^{34},
\end{equation*}
and
\begin{equation}
\label{eqn:W1}
f^1\in \langle e^1\rangle\,,\quad f^3\in\langle e^3\rangle\,,\quad f^{4}\in\langle e^3,e^4\rangle,
\end{equation}
with
\begin{equation}
\label{eqn:W2}
g(e^1,f^1)>0.
\end{equation}
\end{lemma}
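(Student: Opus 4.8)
The plan is to reproduce, for the hyperbolic structure equations \eqref{eqn:X3}, the normalization already carried out in the $\Nil^3\times\R$ case (Lemma~\ref{inv2}). Since $(\Omega,J)$ is invariant, in the global coframe \eqref{1341} the tensors $\Omega$, $J$ and $g$ have constant coefficients, so the statement is linear algebra on $\g^*$ subject to the almost-Hermitian relations $J^2=-\mathrm{Id}$, $g(J\cdot,J\cdot)=g(\cdot,\cdot)$ and $\Omega(\cdot,\cdot)=g(J\cdot,\cdot)$. First I would extract the constraint coming from $d\Omega=0$: writing $\Omega=\sum_{i<j}\omega_{ij}\,e^{ij}$ and differentiating through \eqref{eqn:X3} gives $d\Omega=\omega_{23}\,e^{123}-\omega_{24}\,e^{124}$, hence $\omega_{23}=\omega_{24}=0$. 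Then $\Omega^2=2\,\omega_{12}\omega_{34}\,e^{1234}$, so nondegeneracy forces $\omega_{12}\omega_{34}\neq0$; in particular $\omega_{34}\neq0$, which is the algebraic reason why on $\Sol^3\times\R$ every invariant almost-K\"ahler structure is non-Lagrangian, and the coplane $\langle e^3,e^4\rangle$ is $\Omega$-nondegenerate.

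Next I would normalize the coframe exactly as in Lemma~\ref{inv2}. The equations \eqref{eqn:X3} single out intrinsically the line $\langle e^1\rangle$ (the direction of the derivation, read off from $de^3=e^1\wedge e^3$ and $de^4=-e^1\wedge e^4$) together with its two eigenlines $\langle e^3\rangle$ and $\langle e^4\rangle$. The heart of the argument is to check that, as a consequence of $\omega_{23}=\omega_{24}=0$ and of the compatibility of $J$ with $g$, the fiber coplane $\langle e^3,e^4\rangle$ is $J$-invariant and $g$-orthogonal to $\langle e^1\rangle$. Granting this, its orthogonal complement $\langle e^3,e^4\rangle^{\perp}$ is a $J$-invariant plane containing $e^1$, so I may set $f^1=e^1/\norm{e^1}$, with the sign determined by the requirement $g(e^1,f^1)>0$ in \eqref{eqn:W2}, and $f^2=J f^1$; since the two planes $\langle f^1,f^2\rangle$ and $\langle e^3,e^4\rangle$ are $J$-invariant and mutually $g$-orthogonal, one automatically obtains $\Omega=f^{12}+f^{34}$ with $\langle f^3,f^4\rangle=\langle e^3,e^4\rangle$. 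Finally I would use the residual $SO(2)$-freedom of rotating the orthonormal pair $(f^3,f^4)$ inside $\langle e^3,e^4\rangle$ to bring $f^3$ onto the line $\langle e^3\rangle$; then $f^4=J f^3$ automatically lies in $\langle e^3,e^4\rangle$, which is exactly \eqref{eqn:W1}. This last rotation is the analogue of the rotation step used to achieve $g(e^3,f^2)=0$ in the proof of Lemma~\ref{inv2}.

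I expect the main obstacle to be precisely the alignment claimed above, namely that the canonical fiber coplane $\langle e^3,e^4\rangle$ is $J$-stable and $g$-orthogonal to $e^1$. This is the point at which the non-nilpotent, hyperbolic nature of $\Sol^3$ must be used, and where one cannot argue with the symplectic form alone: the closedness relations $\omega_{23}=\omega_{24}=0$ say that $e^1$ is, up to scale, the contraction of $\Omega$ by the central direction, and hence they constrain $J e^1$ to be $g$-orthogonal to the fiber, while it is the positive-definiteness of $g$ that excludes the residual deformations of $J$ mixing base and fiber directions. I would therefore set up this reduction first, translating $d\Omega=0$ into orthogonality via $\Omega(\cdot,\cdot)=g(J\cdot,\cdot)$ and then invoking positivity; once it is in place, the remaining orthonormalization and sign-chasing are routine, so the real work is concentrated in that single alignment step.
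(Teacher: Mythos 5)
The paper does not actually prove this lemma: it is quoted verbatim from \cite{FLSV} (``the following lemma proved in \cite{FLSV}''), so there is no internal argument to compare with; your attempt must therefore stand on its own. The soft parts of your plan are fine and correctly executed: $d\Omega=\omega_{23}e^{123}-\omega_{24}e^{124}$, hence $\omega_{23}=\omega_{24}=0$ and $\omega_{34}\neq 0$, and the final $SO(2)$-rotation bringing $f^3$ onto $\langle e^3\rangle$ is the same routine step as in Lemma~\ref{inv2}. But the entire substance of the lemma is the ``alignment'' you defer to the end, namely that $\langle e^3,e^4\rangle$ is $J$-invariant and $g$-orthogonal to $e^1$ --- equivalently, that $e^1$, $Je^1$, $e^3$, $e^4$ satisfy the four relations $g^*(e^1,e^3)=g^*(e^1,e^4)=g^*(Je^1,e^3)=g^*(Je^1,e^4)=0$ --- and this is asserted, not proved. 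Note the contrast with Lemma~\ref{inv2}, whose conclusion follows from the transitivity of $U(2)$ on unit covectors plus one residual rotation and uses no closedness at all; here you are claiming four extra scalar identities, so something beyond frame freedom must produce them.

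Your heuristic delivers only half of these. From $\omega_{23}=\omega_{24}=0$ one indeed gets $\iota_{E_2}\Omega=-\omega_{12}\,e^1$ (where $E_2$ is dual to $e^2$), i.e.\ $e^1$ is proportional to $(JE_2)^\flat$; applying $J$ gives $Je^1\propto g(E_2,\cdot)$, whence $g^*(Je^1,e^j)\propto e^j(E_2)=0$ for $j=3,4$. That is a genuine consequence of $d\Omega=0$. The remaining two relations, $g^*(e^1,e^3)=g^*(e^1,e^4)=0$, amount to $e^3(JE_2)=e^4(JE_2)=0$, i.e.\ to $J$ preserving the plane $\langle E_1,E_2\rangle$, and neither positive-definiteness of $g$ nor closedness of $\Omega$ forces this: for the closed form $\Omega=e^{12}+e^{34}$ the compatible constant $J$'s form a six-parameter family ($Sp(4,\R)/U(2)$), while those preserving $\langle E_1,E_2\rangle$ form only a four-parameter subfamily, and one checks directly that conjugating the standard $J_0$ by the symplectic map $E_1\mapsto E_1+E_3$, $E_4\mapsto E_4-E_2$ yields a compatible $J$ with $g^*(e^1,e^3)\neq 0$. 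So the step you identify as ``the heart of the argument'' is not merely unwritten; the route you propose for it (closedness plus positivity) cannot close it, and whatever normalization or adaptedness \cite{FLSV} uses to obtain \eqref{eqn:W1} is the missing ingredient you would need to locate and supply.
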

Notice that in this case every invariant almost-K\"ahler structure is non-Lagrangian. 
\subsection{The Calabi-Yau equation on $M^4$.}
Let $(\Omega,J)$ be an invariant almost-K\"ahler structure on $M^4$, $(f^k)$ be a coframe as in the previous lemma and $\sigma=\eE^F\,\Omega^2$
be a volume form where $F=F(x,y) \in C^{\infty} ({\mathbb{T}^2})$  satisfies the condition
$$
\int_{\mathbb{T}^2} ({\rm e}^F-1)=0\,.
$$
Then we consider the Calabi-Yau equation 
\begin{equation}\label{CYsol}
(\Omega+da)^2=\sigma,
\end{equation}
where 
$$
a =\sum_{k=1}^4 a_k f^k\,,
$$
is a $1$-form whose components $a_k$ are functions on the base ${\mathbb T}^2$.

Let $g$ be the Riemannian metric induced by $(\Omega,J)$ and set
$$
G^i_j=g(e^i,f^j).
$$  
 Then 
\begin{equation}
\label{eqn:X4}
e^i=G^i_jf^j
\end{equation}
and
$$
f^i=H^i_j e^j,
$$
where  $H=G^{-1}$ is the inverse matrix of $G = (G^i_j)$.
In particular this implies that
\begin{equation*}
G^3_4=H^2_4G^2_2+H^4_4G^2_4=H^3_3G^4_3+H^4_3G^4_4=0.
\end{equation*}

From \eqref{eqn:W1} we have that
$$
f^1= H^1_1\,e^1 \,,\quad f^2=H^2_2\, e^2+H^2_3\,e^3+H^2_4 e^4\,,
\quad f^3=H^3_3\, e^3\,,\quad f^4=H^4_3\, e^3+H^4_4\,e^4\,.
$$
Then, making use of \eqref{eqn:X3} and \eqref{eqn:X4}, we obtain     
\begin{alignat*}{2}
&df^1=0,&\quad &df^2=G^1_1(H^2_3G^3_3-H^2_4G^4_3)f^{13}-G^1_1H^2_4G^4_4f^{14},
\\ &df^3=G^1_1f^{13},&\quad& df^4=G^1_1(H^4_3G^3_3-H^4_4G^4_3)f^{13}-G^1_1f^{14}.
\end{alignat*}

Let 
$$
a=a_1f^1+a_2f^2+a_3f^3+a_4f^4
$$
 be a $T^2$-invariant form on $M^4$.

We have
\begin{align*}
d a=& (G^1_1a_{2,x}-G^2_2 a_{1,y})f^{12}
\\[4 pt] &
+\Bigl(G^1_1a_{3,x}-G^2_3a_{1,y}+G^1_1(H^2_3G^3_3-H^2_4G^4_3)a_2+G^1_1a_3+G^1_1(H^4_3G^3_3-H^4_4G^4_3)a_4\Bigr)f^{13}
\\[4 pt] &
+\Bigl(G^1_1 a_{4,x}-G^2_4a_{1,y}-G^1_1H^2_4G^4_4a_2-G^1_1a_4\Bigr)f^{14}
\\[4 pt] &
+(G^2_2a_{3,y}-G^2_3a_{2,y})f^{23}+(G^2_2a_{4,y}-G^2_4a_{2,y})f^{24}+(G^2_3a_{4,y}-G^2_4a_{3,y})f^{34}\,.
\end{align*}
Hence $d a$ is $J$-invariant if and only if its components satisfy
\begin{equation*}
\begin{cases}
\begin{aligned}
G^1_1a_{3,x}-G^2_3a_{1,y}+G^1_1(H^2_3G^3_3-H^2_4G^4_3)a_2+G^1_1a_3+&G^1_1(H^4_3G^3_3-H^4_4G^4_3)a_4
\\ & =
G^2_2a_{4,y}-G^2_4a_{2,y},
\end{aligned}
\\[4 pt]
G^1_1 a_{4,x}-G^2_4a_{1,y}-G^1_1H^2_4G^4_4a_2-G^1_1a_4=G^2_3a_{2,y}-G^2_2a_{3,y},
\end{cases}
\end{equation*}
and equation \eqref{CYsol} becomes
\begin{multline*}
(1+G^1_1a_{2,x}-G^2_2a_{1,y})(1+G^2_3a_{4,y}-G^2_4a_{3,y})
\\
-(G^2_2a_{4,y}-G^2_4a_{2,y})^2-(G^2_2a_{3,y}-G^2_3a_{2,y})^2
={\rm e}^F\,.
\end{multline*}
Therefore the  Calabi-Yau problem reduces to the following system of partial differential equations
\begin{equation}\label{system}
\begin{cases}
\begin{aligned}
G^1_1a_{3,x}-G^2_3a_{1,y}+G^1_1(H^2_3G^3_3-H^2_4G^4_3)a_2+G^1_1a_3+&G^1_1(H^4_3G^3_3-H^4_4G^4_3)a_4
\\ & =
G^2_2a_{4,y}-G^2_4a_{2,y},
\end{aligned}
\\[4 pt]
G^1_1 a_{4,x}-G^2_4a_{1,y}-G^1_1H^2_4G^4_4a_2-G^1_1a_4=G^2_3a_{2,y}-G^2_2a_{3,y},
\\[4 pt]
\begin{aligned}
(1+G^1_1a_{2,x}-G^2_2a_{1,y})&(1+G^2_3a_{4,y}-G^2_4a_{3,y})
\\
&-(G^2_2a_{4,y}-G^2_4a_{2,y})^2-(G^2_2a_{3,y}-G^2_3a_{2,y})^2
={\rm e}^F\,.
\end{aligned}
\end{cases}
\end{equation}

\subsection{Reduction of \eqref{system} to a single equation.} 
Consider $u\in C^2(\mathbb T^2)$ such that
\begin{equation}
\label{eqn:X2}
\int_{\mathbb T^2} u=0,
\end{equation}
and let
\begin{equation*}
\begin{cases}
\begin{aligned}
&a_1(x,y)=
-\frac {H^1_1G^2_2\bigl(u_y(x,y)-u_y(x,0)\bigr)+
2H^4_4G^4_3\bigl(u(x,y)-u(x,0)\bigr)}{(G^2_3)^2+(G^2_4)^2}
\\&\quad
-\frac {G^1_1H^2_2}{(G^2_3)^2+(G^2_4)^2}\biggl(\int_0^y\bigl(u_{xx}(x,t)-u(x,t)\bigr)\,dt-y\int_0^1\bigl(u_{xx}(x,t)-u(x,t)\bigr)\,dt\biggr),
\end{aligned} 
\\[30pt]
a_2(x,y)={\displaystyle -\frac 1{(G^2_3)^2+(G^2_4)^2}\biggl(\int_0^x\Bigl(\int_0^1 u(s,t)\,dt\Bigr)\,ds-\int_0^1\bigl(u_x(x,t)-u_x(0,t)\bigr)\,dt\biggr),}
\\[15pt]
\begin{aligned}
&a_3(x,y)=
-\frac {H^2_2G^2_3u_x(x,y)+H^1_1G^2_4u_y(x,y)-H^2_2(G^2_3-2G^2_4H^4_4G^4_3)u(x,y)}{(G^2_3)^2+(G^2_4)^2}
\\
&\quad
-\frac{H^2_2G^2_3}{(G^2_3)^2+(G^2_4)^2}\biggl(\int_0^x\Bigl(\int_0^1u(s,t)\,dt\Bigr)\,ds
-\int_0^1\bigl(u_x(x,t)-u_x(0,t)\bigr)\,dt
\biggr),
\end{aligned}
\\[30pt]
\begin{aligned}
&a_4(x,y)=
-\frac {H^2_2G^2_4 u_x(x,y)-H^1_1G^2_3u_y(x,y)+H^2_2G^2_4u(x,y)}{(G^2_3)^2+(G^2_4)^2}
\\&\quad
-\frac {H^2_2G^2_4}{(G^2_3)^2+(G^2_4)^2}\biggl(\int_0^x\Bigl(\int_0^1 u(s,t)\,dt\Bigr)\,ds
- \int_0^1\bigl(u_x(x,t)-u_x(0,t)\bigr)\,dt
\biggr).
\end{aligned}
\end{cases}
\end{equation*}
Thanks to condition \eqref{eqn:X2} we have that the functions $a_1$ to $a_4$ are periodic. 
A long computation shows that  the first two equations of system \eqref{system} are identically satisfied, while the third one becomes:
\begin{equation}
\label{eqn:X5}
(u_{xx}+B_{11}u_y+C_{11}+Du)
(u_{yy}+B_{22}u_y+C_{22})-(u_{xy}+B_{12}u_y)^2=E_1+E_2\mathrm e^F
\end{equation}
where
\begin{align*}
&B_{11}=\frac {2\,H^1_1G^2_2G^2_3(G^2_4+G^2_3H^4_4G^4_3)}{(G^2_3)^2+(G^2_4)^2}, \\
&B_{12}=\frac {(G^2_4)^2-(G^2_3)^2+2G^2_3G^2_4H^4_4G^4_3}{(G^2_3)^2+(G^2_4)^2}, \\
&B_{22}=-\frac {2G^1_1H^2_2G^2_4(G^2_3-G^2_4H^4_4G^4_3)}{(G^2_3)^2+(G^2_4)^2}, \\
&C_{11}=H^1_1\Bigl((G^2_2)^2+(G^2_3)^2+(G^2_4)^2\Bigr),\qquad C_{22}=G^1_1,  \\
&D=-1,\qquad E_1=(G^2_2)^2,\qquad E_2=(G^2_3)^2+(G^2_4)^2.
\end{align*}
In particular we have
\begin{equation*}
B_{11}B_{22}-(B_{12})^2=-1
\end{equation*}
and
\begin{equation*}
C_{11}C_{22}-(C_{12})^2=E_1+E_2.
\end{equation*}

\section{The Monge-Amp\`ere equation}

Both equations \eqref{2eqn:6} and \eqref{eqn:X5} are generalized Monge-Amp\`ere equations of the following type:
\begin{equation}
\label{eqn:X15}
A_{11}[u]A_{22}[u]-\bigl(A_{12}[u]\bigr)^2=E_1+E_2\, \mathrm e^F,
\end{equation}
where
\begin{align*}
&A_{11}[u]=u_{xx}+B_{11}u_y+C_{11}+Du, \\
&A_{12}[u]=u_{xy}+B_{12}u_y+C_{12}, \\
&A_{22}[u]=u_{yy}+B_{22}u_y+C_{22},
\end{align*}
with $B_{ij}$, $C_{ij}$, $D$, $E_i$ real numbers such that
\begin{gather}
\label{eqn:X16}
C_{11}+C_{22}>0,
\qquad D\le 0,
\\
\label{eqn:X24} E_1>0,\qquad E_2>0,
\\
\label{eqn:XX24}
B_{11}B_{22}-(B_{12})^2=D
\end{gather}
and 
\begin{equation}
\label{eqn:X17}
C_{11}C_{22}-(C_{12})^2=E_1+E_2.
\end{equation}
Moreover 
\begin{equation}
\label{eqn:X25}
F\in C^\infty(\mathbb T^2)
\end{equation}
 and satisfies the condition
\begin{equation}
\label{eqn:X26}
\int_{\mathbb T^2}(\mathrm e^F-1)=0.
\end{equation}
In Theorem \ref{thm:X2} we shall prove  that  equation \eqref{eqn:X15} has a solution belonging to $C^\infty(\mathbb T^2)$ and satisfying the condition
\begin{equation}
\label{eqn:X18}
\int_{\mathbb  T^2}u=0.
\end{equation}

For all $n\in\mathbb N$, $0<\epsilon<1$, consider the semi-norms
\begin{gather*}
\abs{u}_{C^n}=
\max_{\scriptscriptstyle 0\le j\le n}
\sup_{\scriptscriptstyle (x,y)\in \mathbb R^2}
\abs{\partial^j_x\partial^{n-j}_y u(x,y)}
\\
\abs{u}_{C^{n,\epsilon}}=\max_{\scriptscriptstyle 0\le j\le n}
\sup_{\scriptscriptstyle (x,y)\in \mathbb R^2}
\sup_{\scriptscriptstyle (h,k)\in\mathbb R^2\setminus\{(0,0)\}}\frac {\bigabs{\partial^j_x\partial^{n-j}_y u(x+h,y+k)-\partial^j_x\partial^{n-j}_y u(x,y)}}{(h^2+k^2)^{\epsilon/2}}
\intertext{and the norms}
\norm{u}_{C^n}=\max_{\scriptscriptstyle 0\le k\le n}\abs{u}_{C^k},\qquad \norm{u}_{C^{n,\epsilon}}=\max\bigl\{\norm{u}_{C^n},\,\abs{u}_{C^{n,\epsilon}}\bigr\}.
\end{gather*}

\begin{lemma}\label{lem:X2}
Under the hypotheses \eqref{eqn:X24}, for all $u\in C^2(\mathbb T^2)$ satisfying \eqref{eqn:X15} we have that
\begin{equation}
\label{eqn:X20}
\begin{cases}
A_{11}[u] >0, \\ A_{22}[u]>0.\end{cases}
\end{equation}
\end{lemma}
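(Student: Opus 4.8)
The plan is to exploit the strict positivity of the right-hand side of \eqref{eqn:X15}, combined with the algebraic constraints on the coefficients $C_{ij}$ and a single integration over the torus; no estimates are needed.

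First I would observe that, under \eqref{eqn:X24}, the right-hand side $E_1+E_2\mathrm{e}^F$ of \eqref{eqn:X15} is strictly positive everywhere on $\mathbb{T}^2$, since $E_1,E_2>0$ and $\mathrm{e}^F>0$. As $\bigl(A_{12}[u]\bigr)^2\ge 0$, equation \eqref{eqn:X15} yields
$$
A_{11}[u]\,A_{22}[u]=\bigl(A_{12}[u]\bigr)^2+E_1+E_2\mathrm{e}^F>0
$$
at every point. In particular neither $A_{11}[u]$ nor $A_{22}[u]$ can vanish; both are continuous (as $u\in C^2(\mathbb{T}^2)$) on the connected manifold $\mathbb{T}^2$, so each of them has a constant sign, and these two signs coincide because their product is positive. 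It therefore suffices to show that one of them, say $A_{22}[u]$, is positive somewhere.

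Next I would pin down that sign. From \eqref{eqn:X17} and \eqref{eqn:X24} we get $C_{11}C_{22}=(C_{12})^2+E_1+E_2>0$, so $C_{11}$ and $C_{22}$ have the same sign; combined with $C_{11}+C_{22}>0$ from \eqref{eqn:X16}, this forces $C_{22}>0$ (and likewise $C_{11}>0$). Now I would integrate $A_{22}[u]=u_{yy}+B_{22}u_y+C_{22}$ over $\mathbb{T}^2$: the contributions $\int_{\mathbb{T}^2}u_{yy}$ and $\int_{\mathbb{T}^2}u_y$ vanish by periodicity of $u$, leaving $\int_{\mathbb{T}^2}A_{22}[u]=C_{22}\,\mathrm{vol}(\mathbb{T}^2)>0$. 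A function of constant sign with strictly positive integral is positive everywhere, so $A_{22}[u]>0$ on $\mathbb{T}^2$; the identity displayed above then gives $A_{11}[u]>0$ as well, which is exactly \eqref{eqn:X20}.

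The only genuinely delicate point is choosing \emph{which} of the two functions to integrate. The integral of $A_{11}[u]$ picks up the term $D\int_{\mathbb{T}^2}u$, whose sign is not controlled in the present generality (the normalization \eqref{eqn:X18} is not among the hypotheses of this lemma), whereas the integral of $A_{22}[u]$ reduces cleanly to $C_{22}\,\mathrm{vol}(\mathbb{T}^2)$. Thus the heart of the argument is the observation that \eqref{eqn:X16}, \eqref{eqn:X24} and \eqref{eqn:X17} together force $C_{22}>0$; once that is in hand, the constant-sign argument closes the proof immediately.
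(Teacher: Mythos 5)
Your proof is correct, and it diverges from the paper's in the second half. The first step is identical: from \eqref{eqn:X15} and $E_1>0$ one gets $A_{11}[u]A_{22}[u]>0$ everywhere, so by continuity on the connected torus both factors have a common constant sign. To fix that sign, the paper evaluates $A_{22}[u]$ at a point where $u$ attains an extremum in order to kill $u_y$ and control $u_{yy}$ (the printed inequality $u_{yy}\le 0$ at a minimum is a typo for $u_{yy}\ge 0$), concluding $A_{22}[u]\ge C_{22}>0$ there; you instead integrate $A_{22}[u]$ over $\mathbb T^2$, so that periodicity kills $\int u_{yy}$ and $\int u_y$ and leaves $C_{22}\,\mathrm{vol}(\mathbb T^2)>0$, which combined with the constant sign gives positivity. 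Both routes hinge on $C_{22}>0$: the paper uses it tacitly, while you derive it explicitly from \eqref{eqn:X16}, \eqref{eqn:X17} and \eqref{eqn:X24} via $C_{11}C_{22}=(C_{12})^2+E_1+E_2>0$, which is a worthwhile clarification. Note that in doing so you (like the paper's own proof) use more than the hypothesis \eqref{eqn:X24} stated in the lemma; this is harmless since \eqref{eqn:X16}--\eqref{eqn:X17} are standing assumptions of the section, but it is worth flagging. Your remark about why one integrates $A_{22}[u]$ rather than $A_{11}[u]$ (the uncontrolled term $D\int u$) is exactly the right observation and mirrors why the paper works at a minimum of $u$ rather than a maximum.
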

\begin{proof}
Equation  \eqref{eqn:X15} implies that $A_{11}[u]A_{22}[u]>E_1$. Then $A_{11}[u]$ and $A_{22}[u]$ never vanish and have the same sign. At a  point where  $u$ reaches its minimum value, we have $u_y=0$ and $u_{yy}\le 0$. Then 
\begin{equation*}
A_{22}[u]=u_{yy}+C_{22}>0
\end{equation*}
and both $A_{11}[u]$ and $A_{22}[u]$ must be positive everywhere.
\end{proof}
\begin{lemma}
Consider a function $u\in C^2(\mathbb T^2)$ satisfying equation \eqref{eqn:X15}. Under the hypotheses \eqref{eqn:X16}  and \eqref{eqn:X24}  we have that
\begin{equation}
\label{eqn:X21}
Du(x,y)\ge C_{11},\qquad\forall (x,y)\in\mathbb R^2.
\end{equation}
\end{lemma}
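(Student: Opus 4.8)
The plan is to read the pointwise inequality $Du\ge C_{11}$ as a one-sided bound on the solution and to extract it from the maximum principle for the (concave) Monge--Amp\`ere operator, using the positivity of the diagonal terms already established in Lemma \ref{lem:X2}. Since $D\le 0$ by \eqref{eqn:X16}, when $D<0$ the assertion $Du(x,y)\ge C_{11}$ is equivalent to the upper bound $u(x,y)\le C_{11}/D$, so it is enough to control $\max_{\mathbb T^2}u$; the degenerate case $D=0$ must be isolated, since there the inequality collapses to a relation among the constants alone, to be checked directly from \eqref{eqn:X16}, \eqref{eqn:X24} and \eqref{eqn:X17}.

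I would first evaluate the equation at a maximum point $p$ of the periodic function $u$. There $u_y(p)=0$ and the Hessian of $u$ is negative semidefinite, so $u_{xx}(p)\le 0$, $u_{yy}(p)\le 0$ and $u_{xx}(p)u_{yy}(p)-u_{xy}(p)^2\ge 0$. Accordingly the three linear quantities reduce to $A_{11}[u](p)=u_{xx}(p)+C_{11}+Du(p)$, $A_{22}[u](p)=u_{yy}(p)+C_{22}$ and $A_{12}[u](p)=u_{xy}(p)+C_{12}$. By Lemma \ref{lem:X2} both $A_{11}[u](p)$ and $A_{22}[u](p)$ are strictly positive, and $u_{yy}(p)\le 0$ gives the upper estimate $A_{22}[u](p)\le C_{22}$. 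I would then feed these into \eqref{eqn:X15} written as $A_{11}[u](p)\,A_{22}[u](p)=E_1+E_2\eE^{F(p)}+A_{12}[u](p)^2$ and compare with the algebraic identity \eqref{eqn:X17}, namely $C_{11}C_{22}-C_{12}^2=E_1+E_2$. Dividing by the positive factor $A_{22}[u](p)$, using $A_{22}[u](p)\le C_{22}$ and the negativity of $u_{xx}(p)$, the aim is to isolate $Du(p)$ and obtain $Du(p)\ge C_{11}$; since $p$ is precisely where $Du$ attains its minimum (as $D\le 0$), the bound would then propagate to all of $\mathbb R^2$ by periodicity.

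The main obstacle is pinning down the correct constant on the right-hand side: positivity of $A_{11}[u]$ in isolation yields only the weaker bound $Du\ge -C_{11}$, so the sharp constant $+C_{11}$ cannot come from the maximum principle alone. I expect the sign to be forced by the full equation together with the constraint \eqref{eqn:X17} tying $C_{11},C_{22},C_{12}$ to $E_1+E_2$, and by the structural relation \eqref{eqn:XX24} and the sign hypotheses $D\le 0$, $C_{11}+C_{22}>0$ in \eqref{eqn:X16}; it is the interplay of the term $u_{xx}(p)\le 0$, the square $A_{12}[u](p)^2$ and the factor $\eE^{F(p)}$ that must combine to give exactly $C_{11}$ rather than a smaller value. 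Reconciling this with the limiting case $D=0$, where $Du$ vanishes identically, is the most delicate point, and it is here that the specific form \eqref{eqn:XX24}--\eqref{eqn:X17} of the coefficients, as opposed to generic positivity, is indispensable.
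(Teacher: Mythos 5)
Your first evaluation at the maximum point is, in substance, the paper's entire proof: since $D\le 0$, the minimum of $Du$ is attained where $u$ attains its maximum; there $u_y=0$ and $u_{xx}\le 0$, so the positivity $A_{11}[u]>0$ from Lemma \ref{lem:X2} gives $C_{11}+Du\ge u_{xx}+B_{11}u_y+C_{11}+Du>0$, i.e.\ $Du>-C_{11}$ at that point and hence everywhere. The paper does nothing more, and nothing more is possible: your correct observation that ``positivity of $A_{11}[u]$ in isolation yields only the weaker bound $Du\ge -C_{11}$'' should have been pushed to its logical conclusion, namely that \eqref{eqn:X21} as printed carries a sign typo and the intended (and provable) statement is $Du\ge -C_{11}$. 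Indeed \eqref{eqn:X16}, \eqref{eqn:X24} and \eqref{eqn:X17} force $C_{11}>0$ (from $C_{11}C_{22}=C_{12}^2+E_1+E_2>0$ together with $C_{11}+C_{22}>0$, both $C_{11}$ and $C_{22}$ are positive), while $u\equiv 0$ solves \eqref{eqn:X15} for $F\equiv 0$ precisely because of \eqref{eqn:X17}, yielding $Du\equiv 0<C_{11}$; likewise $D=0$, which actually occurs in \eqref{2eqn:6}, makes $Du$ vanish identically. So the literal inequality $Du\ge C_{11}$ is false, and what the subsequent $C^2$ estimate actually uses is only $-Du\le C_{11}$ in the lower bound for $u_{xx}$ --- exactly the bound your maximum-principle step delivers. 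This is also visible inside the paper's own proof, whose displayed inequality $C_{11}+Du>0$ contradicts its stated conclusion.

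Consequently the second half of your proposal --- the program of upgrading $-C_{11}$ to $+C_{11}$ by dividing the full equation by $A_{22}[u](p)$, invoking the square $A_{12}[u](p)^2$, the factor $\eE^{F(p)}$ and the identities \eqref{eqn:XX24} and \eqref{eqn:X17} --- is a step that would fail: it aims at an unprovable statement, and the ``delicate limiting case $D=0$'' you hoped to reconcile is in fact a counterexample to the literal claim, not a boundary case of it. The auxiliary estimates you prepare for this purpose (e.g.\ $A_{22}[u](p)\le C_{22}$ at the maximum point, or the second-derivative test $u_{xx}u_{yy}-u_{xy}^2\ge 0$) are never needed. Had you stopped after the first evaluation at $p$, recorded $Du(p)>-C_{11}$, and propagated it by minimality of $Du$ at $p$, your argument would coincide with the paper's proof, modulo the typo in the statement of the lemma.
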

\begin{proof}
Consider a point where $Du$ attains its minimum value. Since $D\le 0$, this corresponds to a point where $u$ reaches its maximum value.
Then we have $u_y=0$ and $u_{xx}\le 0$ and from \eqref{eqn:X20} we have
\begin{equation*}
C_{11}+Du\ge u_{xx}+C_{11}+Du>0,
\end{equation*}
which implies 
\begin{equation*}
Du\ge C_{11},
\end{equation*}
at the maximum and therefore everywhere.
\end{proof}

We need  Lemma 6.3 of  \cite{FLSV}:
\begin{lemma}\label{lem:X1}
Consider $w\in C^2(\mathbb T)$ and two real numbers $\alpha$ and $\beta$ such that
\begin{equation}
\label{eqn:X6} w''(t)+\alpha w'(t)\ge \beta,\qquad\forall t\in\mathbb R.
\end{equation} 
Then we have
\begin{equation}
\label{eqn:X14}
\abs{w'(t)}\le 2\abs{\beta}\mathrm e^{2\abs{\alpha}},\qquad\forall t\in\mathbb R.
\end{equation}
\end{lemma}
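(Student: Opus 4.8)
The plan is to reduce the two--sided gradient bound \eqref{eqn:X14} to a one--sided differential inequality for the single periodic function $v:=w'$ and then to exploit periodicity to close both sides. Writing $v=w'\in C^1(\mathbb T)$, hypothesis \eqref{eqn:X6} becomes
\[
v'(t)+\alpha v(t)\ge\beta,\qquad\forall t\in\mathbb R .
\]
The first thing I would record is that periodicity already pins down the sign of $\beta$: integrating this inequality over one period and using that $\int v'=\int w''=0$ and $\int v=\int w'=0$ (both integrands being derivatives of periodic functions) gives $\beta\le 0$, so that $\abs{\beta}=-\beta$. The same mean--zero property of $v$, via the intermediate value theorem, produces a point $t_0$ with $v(t_0)=0$; this zero will serve as the anchor from which the estimate is propagated in both directions.

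Next I would introduce the integrating factor $g(t):=\eE^{\alpha t}v(t)$, so that the inequality reads $g'(t)\ge\beta\,\eE^{\alpha t}$ and, crucially, $g(t_0)=0$. Integrating $g'\ge\beta\,\eE^{\alpha\tau}$ \emph{forward} from $t_0$ to any $t\ge t_0$ and multiplying back by $\eE^{-\alpha t}$ yields a lower bound
\[
v(t)\ge\beta\int_{t_0-t}^{0}\eE^{\alpha s}\,ds ,
\]
while integrating \emph{backward} from $t\le t_0$ up to $t_0$ gives, in exactly the same way, an upper bound $v(t)\le-\beta\int_{0}^{t_0-t}\eE^{\alpha s}\,ds$. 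Since $v$ is periodic it is enough to let $t$ range over a single period on either side of $t_0$; on such an interval the displacement $\abs{t-t_0}$ is at most the period, so each integral is nonnegative and bounded by $\eE^{\abs{\alpha}}$ times the period length. Using $\beta\le0$ and combining the two inequalities yields $\abs{v(t)}\le\abs{\beta}\,\eE^{\abs{\alpha}}$ up to a factor depending only on the period, which is in any case dominated by the stated bound \eqref{eqn:X14} (indeed with the sharper constant $\abs{\beta}\eE^{\abs{\alpha}}$ when $\mathbb T$ has period one).

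The only genuine obstacle is that the hypothesis is a \emph{one--sided} inequality while the conclusion is a \emph{two--sided} pointwise bound: a purely local maximum--principle argument at an extremum of $v$ would force $v'=0$ there and hence control only one sign of $v$ at that point, which does not suffice. The resolution is to use periodicity twice over — once to guarantee a zero of $v=w'$ (the anchor) and once to force $\beta\le0$ (the correct sign) — after which the single integrating--factor computation, run forward and backward over at most one period, delivers both sides of the estimate at once. The remaining work is only the elementary integral bounds sketched above.
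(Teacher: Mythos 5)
Your proof is correct, and in fact sharper than the statement: anchoring at a zero $t_0$ of $v=w'$ (which exists by periodicity), using periodicity again to get $\beta\le 0$, and running the integrating-factor inequality $\bigl(\eE^{\alpha t}v\bigr)'\ge\beta\,\eE^{\alpha t}$ forward and backward over at most one period yields $\abs{w'}\le\abs{\beta}\,\eE^{\abs{\alpha}}$, which dominates the claimed bound $2\abs{\beta}\,\eE^{2\abs{\alpha}}$. Note that the paper does not prove this lemma itself but imports it verbatim as Lemma~6.3 of \cite{FLSV}; your argument is the standard one for such one-sided Gronwall-type estimates on the circle and is essentially the route taken there, so there is nothing to object to — all the steps (sign of $\beta$ from $\int w'=\int w''=0$, existence of the anchor point, and the elementary bound $\int_0^{s}\eE^{\alpha\sigma}\,d\sigma\le\eE^{\abs{\alpha}}$ for $0\le s\le 1$) check out.
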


\begin{theorem}
Assume hypotheses \eqref{eqn:X16} and \eqref{eqn:X17} are satisfied. Then all solutions of \eqref{eqn:X15} satisfy the following estimate:
\begin{equation*}
\norm{u}_{C^2}\le
2\bigl(\abs{B_{11}}+1\bigr)\abs{B_{22}}\mathrm e^{2 C_{22}}+C_{11}+C_{22}.
\end{equation*}\end{theorem}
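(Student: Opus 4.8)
The plan is to read the estimate off from the three preceding lemmas, exploiting the fact that all the ellipticity information in \eqref{eqn:X15} is carried by the \emph{sign} conditions $A_{11}[u]>0$, $A_{22}[u]>0$ of Lemma \ref{lem:X2} and by the one–sided bound \eqref{eqn:X21}, $Du\ge C_{11}$. None of these involves the size of $F$, so every quantity manufactured from them will be controlled by the coefficients of \eqref{eqn:X15} alone, which is what the $F$-free right-hand side demands. I would therefore not touch the value of $E_1+E_2\mathrm e^F$ at all, only its positivity.

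The heart of the argument is the gradient bound in the $y$-direction. Fix $x$ and regard $w(y):=u(x,y)$ as a function in $C^2(\mathbb T)$. Since $A_{22}[u]>0$ by \eqref{eqn:X20}, we have
\[
w''(y)+B_{22}\,w'(y)\ge -C_{22}\qquad\text{for all }y,
\]
which is exactly hypothesis \eqref{eqn:X6} of Lemma \ref{lem:X1} with $\alpha=B_{22}$ and $\beta=-C_{22}$. That lemma then bounds $\sup|u_y|$ by a constant depending only on $B_{22}$ and $C_{22}$, uniformly in $x$ and in $F$. This is the only first-order quantity that the positivity controls \emph{directly}, because $A_{22}[u]$ is the single operator among the three that restricts to a genuine one-variable ODE along a coordinate line.

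With $\sup|u_y|$ in hand I would next pin down $u$ itself — using $Du\ge C_{11}$ together with $D\le 0$ from one side and the normalization \eqref{eqn:X18} combined with the gradient bound from the other — and then feed everything back into $A_{11}[u]>0$, which along horizontal lines reads $u_{xx}+Du\ge -B_{11}u_y-C_{11}$, to extract the remaining first-order information and finally the mixed and pure second-order terms through \eqref{eqn:X15}; collecting the worst constant is what should produce $2(\abs{B_{11}}+1)\abs{B_{22}}\mathrm e^{2C_{22}}+C_{11}+C_{22}$. This last passage is exactly where I expect the main obstacle to lie. Lemma \ref{lem:X1} \emph{only} sees the $y$-derivative: the operator $A_{11}[u]$ couples $u_{xx}$ to $u_y$ (and, when $D\ne 0$, carries the zeroth-order term $Du$), so it is not a one-variable ODE, and squeezing out the $x$-gradient and the second-order bounds without reintroducing a dependence on $\sup F$ is delicate. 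It is precisely here that the structural identities \eqref{eqn:XX24}, $B_{11}B_{22}-(B_{12})^2=D$, and \eqref{eqn:X17}, $C_{11}C_{22}-(C_{12})^2=E_1+E_2$, have to be invoked to keep the two operators in balance and to convert the single one-sided inequality above into control of the full $C^2$ norm.
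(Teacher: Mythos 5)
Your first step coincides exactly with the paper's: from $A_{22}[u]>0$ you read off the one-variable differential inequality $u_{yy}+B_{22}u_y\ge -C_{22}$ along vertical lines and invoke Lemma \ref{lem:X1} with $\alpha=B_{22}$, $\beta=-C_{22}$ to get $\abs{u_y}\le 2\abs{B_{22}}\mathrm e^{2C_{22}}$, uniformly in $x$ and in $F$. That part is right and is the paper's argument verbatim.

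The remainder, however, is where you stop short, and your diagnosis of the difficulty is off. You flag the passage from $A_{11}[u]>0$ to the $x$-gradient as ``delicate'' because $A_{11}[u]$ couples $u_{xx}$ to $u_y$ and to $Du$, and you propose to resolve this by invoking the structural identities \eqref{eqn:XX24} and \eqref{eqn:X17}. Neither identity is needed here, and the step is not delicate: by the time you reach $A_{11}[u]>0$ you already hold a pointwise bound on $\abs{u_y}$ (step one) and a pointwise lower bound on $Du$ (the lemma establishing \eqref{eqn:X21}, obtained by evaluating $A_{11}[u]>0$ at a maximum point of $u$, where $u_y=0$ and $u_{xx}\le 0$). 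Substituting both into $u_{xx}+B_{11}u_y+C_{11}+Du>0$ leaves a \emph{constant} lower bound $u_{xx}\ge -2\abs{B_{11}B_{22}}\mathrm e^{2C_{22}}-C_{11}-C_{22}$, i.e.\ precisely a one-variable inequality of the form \eqref{eqn:X6} along horizontal lines with $\alpha=0$, so Lemma \ref{lem:X1} applies a second time and yields $\abs{u_x}\le 2\abs{B_{11}B_{22}}\mathrm e^{2C_{22}}+C_{11}+C_{22}$. The $C^0$ bound then comes not from $Du\ge C_{11}$ but from the normalization: $u$ has zero mean, hence vanishes somewhere, and integrating the gradient along a segment from that point, together with periodicity, gives $\abs{u}_{C^0}\le 2\abs{u}_{C^1}$. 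Your proposal never actually produces the $x$-derivative bound, the $C^0$ bound, or the final constant; it defers them to an unexecuted balancing argument via \eqref{eqn:XX24} and \eqref{eqn:X17}, which plays no role in the paper's proof of this estimate (those identities are used later, for the continuity method). As written, the proof is therefore incomplete beyond the $u_y$ estimate.
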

\begin{proof}
From \eqref{eqn:X20}  we obtain that
\begin{equation*}
u_{yy}+B_{22}u_y\ge -C_{22},
\end{equation*}
hence from Lemma \ref{lem:X1} we obtain that
\begin{equation}
\label{eqn:X22}
\abs{u_y}\le 2 \abs{B_{22}}\mathrm e^{2C_{22}}.
\end{equation}
From \eqref{eqn:X20}, \eqref{eqn:X21} and \eqref{eqn:X22}, we obtain
\begin{equation*}
u_{xx}\ge -2\abs{B_{11}B_{22}}\mathrm e^{2C_{22}}-C_{11}-C_{22},
\end{equation*} 
hence from Lemma \ref{lem:X1} we obtain
\begin{equation}
\label{eqn:X23}\abs{u_x}\le 2\abs{B_{11}B_{22}}\mathrm e^{2C_{22}}+C_{11}+C_{22}.
\end{equation}

Now consider a point $(x_0,y_0)\in [0,1]\times[0,1]$ where  $u$ vanishes. Then we have
\begin{equation*}
\begin{split}
u(x,y)=&\int_0^1 u_x\bigl((1-t)x+tx_0,(1-t)y+ty_0\bigr)\,dt\,(x-x_0)  \\
&+\int_0^1 u_y\bigl((1-t)x+tx_0,(1-t)y+ty_0\bigr)\bigr)\,dt\,(y-y_0),
\end{split}\end{equation*}
which, together periodicity, implies 
\begin{equation*}
\abs{u}_{C^0}\le 2\abs{u}_{C^1}.
\end{equation*}
This estimate, together \eqref{eqn:X22} and \eqref{eqn:X23}, implies
\begin{equation*}
\abs{u}_{C^0}\le 2\bigl(\abs{B_{11}}+1\bigr)\abs{B_{22}}\mathrm e^{2 C_{22}}+C_{11}+C_{22}. \qedhere
\end{equation*}
\end{proof}

Let $\tau\in [0,1]$ and set
\begin{equation}
\mathfrak S_\tau=\Bigl\{u\in C^{2}(\mathbb T^2)\mid A_{11}[u]A_{22}[u]-\bigl(A_{12}[u]\bigr)^2=E_1+(1-\tau)E_2+\tau E_2 \mathrm e^F,\, \textstyle{\int_{\mathbb T^2}u=0}\Bigr\}.
\end{equation}

\begin{theorem}\label{thm:X1}
Assume hypotheses \eqref{eqn:X16} to \eqref{eqn:X25} are satisfied.  Then 
\begin{equation*}
\mathfrak S_\tau\subset C^{2,1/2}(\mathbb T^2),\qquad\forall \tau\in[0,1],
\end{equation*}
and
\begin{equation*}
\sup_{0\le\tau\le 1}\sup_{u\in\mathfrak S_\tau}\norm{u}_{C^{2,1/2}}<\infty.
\end{equation*}
\end{theorem}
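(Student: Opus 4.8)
The plan is to run the usual continuity-method a priori estimate: extract from the preceding results a uniform $C^2$ bound, promote it to uniform ellipticity, and then pass to a uniform $C^{2,1/2}$ bound by the regularity theory for concave fully nonlinear equations. First I would record that for every $\tau\in[0,1]$ the right-hand side
$$g_\tau=E_1+(1-\tau)E_2+\tau E_2\eE^{F}$$
is smooth and satisfies $E_1<g_\tau\le E_1+E_2\max\{1,\sup_{\mathbb T^2}\eE^F\}$, with all its $C^k$ norms bounded independently of $\tau$ by \eqref{eqn:X25}. Moreover every $u\in\mathfrak S_\tau$ solves an equation of the form \eqref{eqn:X15} with the \emph{same} structural constants $B_{ij},C_{ij},D,E_i$, only the right-hand side being replaced by $g_\tau$. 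Since the proofs of Lemma \ref{lem:X2} and of the two subsequent estimates used only the inequality $A_{11}[u]A_{22}[u]>E_1$, which still holds because $g_\tau>E_1$, together with \eqref{eqn:X16}, the $C^2$ estimate of the previous theorem applies verbatim to every member of the family. This yields a bound $\norm{u}_{C^2}\le K$, valid for all $\tau\in[0,1]$ and all $u\in\mathfrak S_\tau$, with $K$ depending only on $B_{ij},C_{ij},D$.

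Next I would establish uniform ellipticity. Writing the equation as $\Phi(D^2u)=g_\tau$ with $\Phi=A_{11}A_{22}-(A_{12})^2$, the linearized operator has coefficient matrix whose determinant equals $g_\tau\ge E_1>0$ and whose trace is $A_{11}[u]+A_{22}[u]$. By Lemma \ref{lem:X2} both summands are positive, and by the $C^2$ bound they satisfy $A_{11}[u]+A_{22}[u]\le\Lambda$ for some $\Lambda$ independent of $\tau$; hence the eigenvalues of the linearization lie in the fixed interval $[E_1/\Lambda,\Lambda]\subset(0,\infty)$. Since $\Phi^{1/2}=(\det)^{1/2}$ is concave on the cone $\{A_{11}>0,\ A_{22}>0,\ \Phi>0\}$ and $(A_{11},A_{12},A_{22})$ depends affinely on $(D^2u,Du,u)$, the rewritten equation $\Phi(D^2u)^{1/2}=g_\tau^{1/2}$ is a uniformly elliptic concave fully nonlinear equation whose ellipticity and concavity constants are uniform in $\tau$. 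The Evans--Krylov theorem---applied on $\mathbb T^2$, where interior estimates are global---then gives a uniform bound $\norm{u}_{C^{2,\alpha_0}}\le K'$ for some $\alpha_0\in(0,1)$ depending only on the ellipticity constants.

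Finally I would bootstrap. Because $g_\tau$ is smooth with uniform $C^k$ norms and $u\in C^{2,\alpha_0}$, differentiating \eqref{eqn:X15} in $x$ and in $y$ shows that $u_x$ and $u_y$ solve linear uniformly elliptic equations with $C^{0,\alpha_0}$ coefficients and smooth right-hand sides; Schauder estimates then provide a uniform $C^{3,\alpha_0}$ bound, and hence, via the continuous embedding $C^{3}\hookrightarrow C^{2,1/2}$ on the compact torus, both the inclusion $\mathfrak S_\tau\subset C^{2,1/2}(\mathbb T^2)$ and the uniform bound $\sup_{0\le\tau\le1}\sup_{u\in\mathfrak S_\tau}\norm{u}_{C^{2,1/2}}<\infty$. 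The main obstacle is exactly the passage from $C^2$ to $C^{2,\alpha_0}$: the uniform $C^2$ estimate is the easy input, while securing uniform ellipticity from the lower bound $g_\tau\ge E_1>0$ (hypothesis \eqref{eqn:X24}) and then invoking the concave fully nonlinear regularity theory is where the real work lies. In two dimensions one may alternatively replace Evans--Krylov by the classical $C^{2,\alpha}$ interior estimates for the Monge--Amp\`ere equation.
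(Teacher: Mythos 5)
Your proposal is correct and follows the same overall strategy as the paper: uniform a priori bounds from the preceding results, uniform ellipticity from $A_{11}[u]A_{22}[u]-(A_{12}[u])^2=g_\tau\ge E_1>0$ together with Lemma \ref{lem:X2} and hypothesis \eqref{eqn:X24}, and then a black-box regularity theorem for the resulting fully nonlinear elliptic equation. The only real difference is the black box: the paper simply invokes Theorem~2 of Heinz \cite{Heinz} on interior estimates for two-dimensional elliptic Monge--Amp\`ere equations, which delivers the $C^{2,1/2}$ bound in one step, whereas you route through Evans--Krylov (via concavity of $\Phi^{1/2}$) to get $C^{2,\alpha_0}$ and then bootstrap with Schauder to $C^{3,\alpha_0}\hookrightarrow C^{2,1/2}$; the alternative you mention in your closing sentence is precisely the one the paper takes. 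One point worth flagging: your uniform-ellipticity step uses the upper bound $A_{11}[u]+A_{22}[u]\le\Lambda$, which requires a genuine bound on the second derivatives of $u$, while the two-dimensional Monge--Amp\`ere estimates of Heinz need only the $C^1$ bound together with the two-sided bound $E_1\le g_\tau\le E_1+E_2\sup\eE^F$. Since the displayed proof of the preceding theorem actually only controls $u$, $u_x$ and $u_y$ (it never bounds $D^2u$, despite the $\norm{u}_{C^2}$ appearing in its statement), your route leans somewhat harder on that theorem than the paper's does; to be fully self-contained you should either supply the missing second-derivative bound or fall back, as you suggest, on the classical two-dimensional estimates that take only first-order information as input.
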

\begin{proof}
Thanks to lemma \ref{lem:X2} and hypothesis \eqref{eqn:X24} the equation
\begin{equation*}
A_{11}[u]A_{22}[u]-\bigl(A_{12}[u]\bigr)^2=E_1+(1-\tau)E_2+\tau E_2  \mathrm e^F
\end{equation*}
is uniformly elliptic and we can apply Theorem 2 of \cite{Heinz}.
\end{proof}
\begin{cor}\label{cor:X1}
Under the same hypotheses of Theorem \ref{thm:X1} we have that 
\begin{equation*}
\mathfrak S_\tau\subset C^\infty(\mathbb T^2)
\end{equation*}
for all $0\le \tau\le 1$.
\end{cor}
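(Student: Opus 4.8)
The plan is to upgrade the uniform $C^{2,1/2}$ regularity already furnished by Theorem \ref{thm:X1} to $C^\infty$ by a standard elliptic bootstrap, the key point being that equation \eqref{eqn:X15} is \emph{uniformly} elliptic at each of its solutions. First I would record this ellipticity. For $u\in\mathfrak S_\tau$, Lemma \ref{lem:X2} gives $A_{11}[u]>0$ and $A_{22}[u]>0$, while the right-hand side of \eqref{eqn:X15} is $\ge E_1>0$ by \eqref{eqn:X24}. Hence the symmetric matrix with entries $A_{22}[u]$, $-A_{12}[u]$, $A_{11}[u]$ has positive trace and determinant equal to $E_1+(1-\tau)E_2+\tau E_2\mathrm e^F\ge E_1$, so it is positive definite. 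The uniform bound $\sup_\tau\sup_{u\in\mathfrak S_\tau}\norm{u}_{C^{2,1/2}}<\infty$ bounds $\abs{A_{ij}[u]}$, hence the top eigenvalue, from above, and the determinant lower bound then bounds the bottom eigenvalue from below. Thus the linearization of the Monge--Amp\`ere operator at $u$ is uniformly elliptic with $C^{0,1/2}$ coefficients, uniformly over $\mathfrak S_\tau$ and $\tau\in[0,1]$.

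Next I would differentiate the equation. Writing \eqref{eqn:X15} as $M[u]=E_1+(1-\tau)E_2+\tau E_2\mathrm e^F=:g$ and differentiating with respect to $x$ (the case of $y$ being identical), the function $w=u_x$ satisfies the linear second-order equation
\[
A_{22}[u]\,w_{xx}-2A_{12}[u]\,w_{xy}+A_{11}[u]\,w_{yy}+b\,w_y+c\,w=g_x,
\]
where $b=A_{22}[u]B_{11}+A_{11}[u]B_{22}-2A_{12}[u]B_{12}$ and $c=A_{22}[u]D$ lie in $C^{0,1/2}$ (the $B_{ij}$ and $D$ being constants). Its principal coefficients are exactly the entries $A_{22}[u],-A_{12}[u],A_{11}[u]$ above, hence $C^{0,1/2}$ and uniformly elliptic, and by \eqref{eqn:X25} the right-hand side $g_x=\tau E_2\,F_x\,\mathrm e^F$ is smooth. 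The linear Schauder estimate on the torus then yields $w\in C^{2,1/2}(\mathbb T^2)$; doing the same for $u_y$ gives $u\in C^{3,1/2}(\mathbb T^2)$.

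Finally I would iterate. Once $u\in C^{k,1/2}$ with $k\ge 2$, the coefficients $A_{ij}[u]$ and the lower-order terms of the differentiated equation lie in $C^{k-2,1/2}$, while $g_x$ remains smooth; Schauder then gives $w\in C^{k,1/2}$, i.e. $u\in C^{k+1,1/2}$. Repeating raises the order by one at each step, so $u\in C^\infty(\mathbb T^2)$ for every $\tau\in[0,1]$, which is the assertion.

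I expect the only delicate point to be the very first application of the Schauder estimate: a priori $w=u_x$ is known to be only $C^{1,1/2}$, so to conclude $w\in C^{2,1/2}$ one must first justify that $w$ is an honest strong solution of the differentiated equation. This is standard — for instance by establishing the equation for the difference quotients $\bigl(u(\cdot+he_1)-u\bigr)/h$, using the uniform $C^{2,1/2}$ bound to pass to the limit $h\to 0$, and then invoking the interior regularity theorem for uniformly elliptic equations with H\"older coefficients (equivalently, the classical fact that a $C^{2,\alpha}$ solution of a uniformly elliptic fully nonlinear equation with smooth data is $C^\infty$). After this initial step the bootstrap is entirely routine.
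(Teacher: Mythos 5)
Your argument is correct, but it takes a more self-contained route than the paper, whose entire proof of this corollary is a citation: ``It follows from Theorems 1 and 3 of \cite{Nirenberg}.'' In other words, the authors delegate the passage from $C^{2,1/2}$ to $C^\infty$ to Nirenberg's classical regularity theorems for second-order nonlinear elliptic equations in two variables, whereas you reprove the relevant bootstrap by hand: you verify uniform ellipticity from Lemma \ref{lem:X2} together with the determinant identity $A_{11}[u]A_{22}[u]-(A_{12}[u])^2\ge E_1>0$ and the uniform $C^{2,1/2}$ bound of Theorem \ref{thm:X1}, differentiate \eqref{eqn:X15} to obtain a linear uniformly elliptic equation for $u_x$ (and $u_y$) with H\"older coefficients and smooth right-hand side, and iterate the linear Schauder estimates. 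The computation of the differentiated equation is right (the principal coefficients are exactly $A_{22}[u]$, $-A_{12}[u]$, $A_{11}[u]$, and $b$, $c$ are as you state since the $B_{ij}$, $C_{ij}$, $D$ are constants), and you correctly flag the one genuinely delicate point --- justifying that $u_x$ solves the differentiated equation when $u$ is a priori only $C^{2,1/2}$, via difference quotients --- which is precisely the content hidden in the cited reference. What the paper's approach buys is brevity; what yours buys is independence from \cite{Nirenberg} and an explicit display of where hypotheses \eqref{eqn:X24} and \eqref{eqn:X25} enter (positivity of $E_1$ for the ellipticity lower bound, smoothness of $F$ for the right-hand side at each stage of the iteration). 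Either proof is acceptable.
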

\begin{proof}
It follows from Theorems 1 and 3 of \cite{Nirenberg}.
\end{proof}

\begin{theorem}\label{thm:X2}
Under the hypotheses \eqref{eqn:X16} to \eqref{eqn:X26},  we have that 
for all $\tau\in [0,1]$ the equation
\begin{equation*}
A_{11}[u]A_{22}[u]-\bigl(A_{12}[u]\bigr)^2=E_1+(1-\tau)E_2+\tau E_2 \mathrm e^F
\end{equation*}
has a solution in $C^\infty(\mathbb T^2)$ satisfying condition \eqref{eqn:X18}.

In particular, for $\tau=1$ we obtain that equation \eqref{eqn:X15} is solvable.
\end{theorem}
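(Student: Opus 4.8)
The plan is to establish solvability for the whole family by the \emph{method of continuity}, using $\tau$ as deformation parameter and $u\equiv 0$ as the initial solution. I set
\[
S=\bigl\{\tau\in[0,1]\mid \mathfrak S_\tau\neq\emptyset\bigr\},
\]
and intend to show that $S$ is nonempty, open and closed in $[0,1]$; since $[0,1]$ is connected this gives $S=[0,1]$, and the case $\tau=1$ is precisely the solvability of \eqref{eqn:X15}. Note that smoothness of the solutions is already guaranteed by Corollary \ref{cor:X1} and that the normalization \eqref{eqn:X18} is built into the definition of $\mathfrak S_\tau$, so it suffices to produce, for each $\tau$, one element of $\mathfrak S_\tau$.

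Nonemptiness is immediate: at $\tau=0$ the right-hand side is $E_1+E_2$, and evaluating the operator at $u\equiv 0$ gives $C_{11}C_{22}-(C_{12})^2$, which equals $E_1+E_2$ by \eqref{eqn:X17}; together with $\int_{\mathbb T^2}0=0$ this yields $0\in\mathfrak S_0$. For closedness I would invoke the uniform a priori estimate. If $\tau_n\in S$ with $\tau_n\to\tau_*$ and $u_n\in\mathfrak S_{\tau_n}$, then Theorem \ref{thm:X1} gives a bound $\norm{u_n}_{C^{2,1/2}}\le M$ independent of $n$, so by Arzelà--Ascoli a subsequence converges in $C^2(\mathbb T^2)$ to a limit $u_*$ with $\int_{\mathbb T^2}u_*=0$; passing to the limit in the equation shows that $u_*$ solves the $\tau_*$-equation, hence $u_*\in\mathfrak S_{\tau_*}$ (and $u_*\in C^\infty$ by Corollary \ref{cor:X1}), so $\tau_*\in S$.

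The step requiring the most care is openness. Writing $\Phi[u]=A_{11}[u]A_{22}[u]-(A_{12}[u])^2$, the linearization at a solution $u_0$ reads
\[
\mathcal L v=A_{22}[u_0]\,v_{xx}-2A_{12}[u_0]\,v_{xy}+A_{11}[u_0]\,v_{yy}+(\text{first order})+D\,A_{22}[u_0]\,v.
\]
By Lemma \ref{lem:X2} the coefficients $A_{11}[u_0]$ and $A_{22}[u_0]$ are positive while the discriminant equals the (positive) right-hand side, so $\mathcal L$ is uniformly elliptic; moreover $D\le 0$ from \eqref{eqn:X16} makes the zeroth-order coefficient $D\,A_{22}[u_0]\le 0$. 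The delicate point is the interplay with the constraint $\int u=0$. Integrating $\Phi[u]$ over $\mathbb T^2$ and using the compatibility relations \eqref{eqn:XX24} and \eqref{eqn:X17}, all Hessian and first-order terms cancel and one is left with
\[
\int_{\mathbb T^2}\Phi[u]=E_1+E_2+C_{22}D\int_{\mathbb T^2}u,
\]
where $C_{22}>0$. This identity is what reconciles the deformation with the normalization.

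Consequently openness splits into two cases. If $D<0$, the above identity forces $\int_{\mathbb T^2}u=0$ for \emph{every} solution, and the maximum principle (strict sign of the zeroth-order term) together with ellipticity and Fredholm theory of index zero makes $\mathcal L\colon C^{2,\alpha}\to C^{0,\alpha}$ an isomorphism; the implicit function theorem then produces, for $\tau$ near $\tau_0$, a solution in $C^{2,\alpha}$ which automatically lies in $\mathfrak S_\tau$. If $D=0$, then $\Phi$ is invariant under adding constants, $\ker\mathcal L$ consists exactly of the constants, and differentiating the integral identity gives $\int_{\mathbb T^2}\mathcal L v=0$; since the image therefore lies in the mean-zero subspace and has matching codimension one, $\mathcal L$ restricts to an isomorphism between the mean-zero Hölder spaces, and the implicit function theorem again yields elements of $\mathfrak S_\tau$ near $u_0$. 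In both cases $S$ is open, completing the continuity argument and, in particular, solving \eqref{eqn:X15} at $\tau=1$. I expect the bulk of the difficulty to be concentrated in the openness step, specifically in correctly matching the kernel/cokernel of the elliptic linearization with the mean-zero normalization, the genuinely new analytic input being the a priori estimates already supplied by Theorem \ref{thm:X1} and Corollary \ref{cor:X1}.
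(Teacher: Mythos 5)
Your proposal is correct and follows essentially the same route as the paper: the continuity method with $0\in\mathfrak S_0$ from \eqref{eqn:X17}, closedness from the uniform $C^{2,1/2}$ bound of Theorem \ref{thm:X1} plus Ascoli--Arzel\`a, and openness from the elliptic linearization, the maximum principle, Schauder theory and the implicit function theorem. The only cosmetic difference is that you split the openness step into the cases $D<0$ and $D=0$, whereas the paper works in the mean-zero H\"older spaces throughout and treats both cases at once via the strong maximum principle with zeroth-order coefficient $\le 0$; your integral identity $\int_{\mathbb T^2}\Phi[u]=E_1+E_2+C_{22}D\int_{\mathbb T^2}u$ is exactly the compatibility observation $\int_{\mathbb T^2}T(u,\tau)=0$ used there.
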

\begin{proof}
In view of Corollary  \ref{cor:X1}, 
it is sufficient to prove the existence of a $C^2$-solution.
Hence we have to prove that $\mathfrak S_{\tau}(\mathbb T^2)\ne \emptyset$ for all $\tau\in[0,1]$.
If $\tau=0$, thanks to \eqref{eqn:X17} we have that $0\in\mathfrak S_0$. Then we may set 
\begin{equation*}
\rho=\sup\bigl\{\sigma\in[0,1]\mid \mathfrak S_\tau\ne\emptyset,\,\forall \tau\in[0,\sigma]\bigr\}
\end{equation*}
We must show that $\mathfrak S_\rho\ne\emptyset$ and that $\rho=1$.

Consider a sequence $\tau_n\in [0,\rho]$
converging to $\rho$ and such that 
$\mathfrak S_{\tau_n}\ne \emptyset$. Let $u_n\in \mathfrak S_{\tau_n}$ for all $n$. By Theorem \ref{thm:X1}, the sequence $(u_n)$ is bounded in $C^{2,1/2}(\mathbb T^2)$, hence, by Ascoli-Arzel\`a theorem, it contains a  subsequence $(v_n)$ which converges in $C^2(\mathbb T^2)$ to a function $v$, which is a solution belonging to $\mathfrak S_\rho$.

Now we show that $\rho=1$. Assume by contradiction $\rho<1$ and let $ C^{k,1/2}_\ast(\mathbb T^2)$ be the space of functions $u\in  C^{k,1/2}(\mathbb T^2)$  satisfying $\int_{\mathbb T^2}u=0$. Consider the map
\begin{equation*}
T: C^{2,1/2}_\ast(\mathbb T^2)\times[0,1]\to C_\ast^{0,1/2}(\mathbb T^2),
\end{equation*}
defined as
$$
T(u,\tau)=A_{11}[u]A_{22}[u]-\bigl(A_{12}[u]\bigr)^2-E_1-(1-\tau)E_2-\tau E_2\mathrm e^{F}.
$$
Observe that 
\begin{equation*}
\int_{\mathbb T^2}T(u,\tau)=0,
\end{equation*}
thanks to \eqref{eqn:XX24},  \eqref{eqn:X17} and \eqref{eqn:X26}.

We know that there exists $v\in\mathfrak S_\rho\subset C^{2,1/2}_\ast(\mathbb T^2)$ such that  $T(v,\rho)=0$.
We have that
\begin{equation*}
T'[v,\rho](w,0)=
Lw,
\end{equation*}
with
\begin{equation*}
L:C^{2,1/2}_\ast(\mathbb T^2)\to C_\ast^{0,1/2}(\mathbb T^2)
\end{equation*}
given by
\begin{equation}
\label{eqn:X30}\begin{split}
&Lw=\bigl(A_{22}[v]+C_{22}\bigr)w_{xx}-2\bigl(A_{12}[v]+C_{12}\bigr)w_{xy}+
\bigl(A_{11}[v]+C_{11}\bigr)w_{yy} 
\\&\quad+
\Bigl(B_{11}\bigl(A_{22}[v]+C_{22}\bigr)-2B_{12}\bigl(A_{12}[v]+C_{12}\bigr)
+B_{22}\bigl(A_{11}[v]+C_{11}\bigr)\Bigr)w_{y}
\\&\quad+D\bigl(A_{22}[u]+C_{22}\bigr)w.
\end{split}
\end{equation}

Now from Lemma \ref{lem:X2} and hypotheses \eqref{eqn:X24} the matrices 
\begin{equation*}
\begin{bmatrix}
A_{11}[v] & A_{12}[v] \\ A_{12}[v] & A_{22}[v]\end{bmatrix},\qquad 
\begin{bmatrix}
C_{11} & C_{12} \\ C_{12} & C_{22}\end{bmatrix}
\end{equation*}
are positive, so  their sum is positive too, and the operator $L$  is uniformly elliptic. Since $D\bigl(A_{22}[u]+A_{22}[v]\bigr)\le 0$, we may apply the strong maximum principle (\cite{Gilbarg-Trudinger}, Theorem 3.5)
and obtain  that $Lw=0$ implies that $w$ is constant, that is $w=0$, by  condition \eqref{eqn:X18}.
Ellipticity and classical Schauder estimates (\cite{Gilbarg-Trudinger}, Theorem 6.2) show that $L$ is onto. Since $L$ is one-to-one, it must be an isomorphism.
 Then by the implicit
function theorem there exists $\epsilon>0$ such that $\mathfrak S_\tau(\mathbb T^2)\ne\emptyset$ for $\rho<\tau<\rho+\epsilon$,  in contradiction with the definition of $\rho$.
\end{proof}

\bigbreak

\footnotesize\parindent0pt\parskip8pt

Ernesto Buzano\\
Dipartimento di Matematica, Universit\`a di Torino, Via
Carlo Alberto 10, 10123 Torino, Italia\\
\texttt{ernesto.buzano@unito.it}

Anna Fino\\
Dipartimento di Matematica, Universit\`a di Torino, Via
Carlo Alberto 10, 10123 Torino, Italia\\
\texttt{annamaria.fino@unito.it}

Luigi Vezzoni\\
Dipartimento di Matematica, Universit\`a di Torino,
Via Carlo Alberto 10, 10123 Torino, Italia\\
\texttt{luigi.vezzoni@unito.it}

\end{document}